\definecolor{blue}{rgb}{0,0,0.9}
\definecolor{red}{rgb}{0.9,0,0}
\definecolor{green}{rgb}{0,0.9,0}
\definecolor{violet}{rgb}{0.5804,0.0000,0.8275}
\def\@themcountersep{}
\newtheorem{assum}{Assumption}[section]
\newtheorem{theorem}{Theorem}[section]
\newtheorem{THEO}{Theorem}[section]
\newtheorem{ALGo}[THEO]{Algorithm}
\newtheorem{CONJ}[THEO]{Conjecture}
\newtheorem{COND}[THEO]{Condition}
\newtheorem{ASSUMP}[THEO]{Assumption}
\newtheorem{CORO}[THEO]{Corollary}
\newtheorem{DEFI}[THEO]{Definition}
\newtheorem{EXAMP}[THEO]{Example}
\newtheorem{FACT}[THEO]{Fact}
\newtheorem{HYPO}[THEO]{Hypothesis}
\newtheorem{LEMM}[THEO]{Lemma}
\newtheorem{PROB}[THEO]{Problem}
\newtheorem{PROP}[THEO]{Proposition}
\newtheorem{REMA}[THEO]{Remark}
\newcommand{\theo}{\begin{THEO}}
\newcommand{\algo}{\begin{ALGo} \rm}
\newcommand{\cond}{\begin{COND} \rm}
\newcommand{\assump}{\begin{ASSUMP} \rm}
\newcommand{\conj}{\begin{CONJ}}
\newcommand{\coro}{\begin{CORO}}
\newcommand{\defi}{\begin{DEFI} \rm}
\newcommand{\examp}{\begin{EXAMP} \rm}
\newcommand{\fact}{\begin{FACT}}
\newcommand{\hypo}{\begin{HYPO} \rm}
\newcommand{\lemm}{\begin{LEMM}}
\newcommand{\prob}{\begin{PROB} \rm}
\newcommand{\prop}{\begin{PROP}}
\newcommand{\rema}{\begin{REMA} \rm}
\newcommand{\etheo}{\end{THEO}}
\newcommand{\ealgo}{\end{ALGo}}
\newcommand{\econd}{\end{COND}}
\newcommand{\eassump}{\end{ASSUMP}}
\newcommand{\econj}{\end{CONJ}}
\newcommand{\ecoro}{\end{CORO}}
\newcommand{\edefi}{\end{DEFI}}
\newcommand{\eexamp}{\end{EXAMP}}
\newcommand{\efact}{\end{FACT}}
\newcommand{\ehypo}{\end{HYPO}}
\newcommand{\elemm}{\end{LEMM}}
\newcommand{\eprob}{\end{PROB}}
\newcommand{\eprop}{\end{PROP}}
\newcommand{\erema}{\end{REMA}}
\def\0{\mbox{\bf 0}}
\def\1{\mbox{\bf 1}}
\def\2{\mbox{\bf 2}}
\def\3{\mbox{\bf 3}}
\def\4{\mbox{\bf 4}}
\def\5{\mbox{\bf 5}}
\def\6{\mbox{\bf 6}}
\def\7{\mbox{\bf 7}}
\def\8{\mbox{\bf 8}}
\def\9{\mbox{\bf 9}}
\def\a{\mbox{\boldmath $a$}}
\def\b{\mbox{\boldmath $b$}}
\def\p{\mbox{\boldmath $p$}}
\def\q{\mbox{\boldmath $q$}}
\def\s{\mbox{\boldmath $s$}}
\def\u{\mbox{\boldmath $u$}}
\def\v{\mbox{\boldmath $v$}}
\def\w{\mbox{\boldmath $w$}}
\def\x{\mbox{\boldmath $x$}}
\def\B{\mbox{\boldmath $B$}}
\def\C{\mbox{\boldmath $C$}}
\def\H{\mbox{\boldmath $H$}}
\def\I{\mbox{\boldmath $I$}}
\def\L{\mbox{\boldmath $L$}}
\def\O{\mbox{\boldmath $O$}}
\def\Q{\mbox{\boldmath $Q$}}
\def\S{\mbox{\boldmath $S$}}
\def\W{\mbox{\boldmath $W$}}
\def\DC{\mbox{$\cal D$}}
\def\SC{\mbox{$\cal S$}}
\def\Real{\mbox{$\mathbb{R}$}}
\def\SymMat{\mbox{$\mathbb{S}$}}
\def\bbeta{\mbox{\boldmath $\beta$}}
\def\blam{\mbox{\boldmath $\lambda$}}
\def\bell{\mbox{\boldmath $\ell$}}
\begin{document}

\title{ \Large Solving  Pooling Problems by LP and SOCP Relaxations and Rescheduling Methods
} 

\author{
\normalsize 
Masaki Kimizuka\thanks{Department of  Mathematical and Computing Science,
            Tokyo Institute of Technology, 2-12-1 Oh-Okayama, Meguro-ku, Tokyo 152-8552, Japan        
             ({\tt kimi3masa0@gmail.com}). },   \and \normalsize
Sunyoung Kim\thanks{Department of Mathematics, Ewha W. University, 52 Ewhayeodae-gil, Sudaemoon-gu, Seoul 03760, Korea 
			({\tt skim@ewha.ac.kr}). The research was supported
               by  NRF 2017-R1A2B2005119.}, \and \normalsize
Makoto Yamashita\thanks{Department of  Mathematical and Computing Science,
            Tokyo Institute of Technology, 2-12-1 Oh-Okayama, Meguro-ku, Tokyo 152-8552, Japan        
             ({\tt makoto.yamashita@is.titech.ac.jp}).
	This research was partially supported by JSPS KAKENHI (Grant number: 15k00032).
 	}
}

\date{\normalsize February, 2018}

\maketitle 

\begin{abstract}
\noindent
The pooling problem is an important industrial problem in the class of
network flow problems for allocating gas flow in pipeline transportation networks. 
For P-formulation of the pooling problem with time discretization,
we propose second order cone programming (SOCP)  and linear programming (LP) relaxations
 and prove that they obtain the same
optimal value as the semidefinite programming relaxation. 
The equivalence  among the optimal values of the three relaxations is also computationally shown.
Moreover, a rescheduling method is proposed
to efficiently refine the solution obtained by the SOCP or  LP relaxation.
 The efficiency of the SOCP and the LP relaxation and
the proposed rescheduling method is illustrated with numerical results on the test instances from the work of
Nishi in 2010, some large instances,  and Foulds 3, 4, 5 test problems.

\end{abstract}

\noindent
{\bf Key words. } 
Pooling problem, Semidefinite relaxation, Second order cone relaxation, Linear programming relaxation, 
Rescheduling method, Computational efficiency.
 

\noindent
{\bf AMS Classification. } 
90C20,  	
90C22,  	
90C25, 	
90C26.  	


\section{Introduction}

The pooling problem is a network flow problem 
for allocating gas flow in pipeline transportation networks with minimum cost. It arises from applications in the petroleum industry.
Networks of the pooling problem have three types of nodes: sources, blending tanks called pools and  plants.
Gas flows from the sources are blended in the blending tanks and plants  to
produce the desired final products. 
To model such blending,
the pooling problem is formulated as  bilinear nonconvex optimization problems, thus nonconvex quadratically
constrained quadratic problems  (QCQPs)  \cite{HAVERLY78}.

The pooling problem has been studied in two main formulations, the P-formulation \cite{HAVERLY78} and 
Q-formulation \cite{BENTAL94}. 
Difficulties of solving the pooling problem
arise from the existence of pipeline constraints  formulated with binary variables and the blending process
represented as nonlinear constraints. 
The resulting problem is a mixed-integer nonlinear program
known as NP-hard  \cite{ALFAKI13}. 
Various solution methods including   
 approximating heuristics, linear programming relaxations,
decomposition techniques  have been proposed for these formulations.  In particular, 
successive linear relaxations \cite{FLOUDAS09,LIBERTI06} 
 approximate the pooling problem based on
a first-order Taylor expansion.
Another  popular approach is branch-and-bound algorithms which have been implemented to solve
 large-scale problems  \cite{ALFAKI13}.

While
conic relaxations methods including
semidefinite programming (SDP), second order cone (SOCP)  and linear programming (LP) relaxations of general nonconvex QCQPs
have  been widely used to approximate the optimal values of the problems, they have not studied extensively for the pooling problem.
In fact,  no literature on  SOCP and LP relaxations
of the pooling problem could be found to the authors'  best knowledge.  SDP relaxations of the pooling problem were studied in \cite{NISHI10,ZANNI13}.
SDP relaxations of nonconvex QCQPs are known to
 provide tighter bounds for the optimal value than SOCP and LP relaxations, however, solving SDP relaxations  by the primal-dual interior-point methods  \cite{STURM99,TOH98,yamashita2012latest,yamashita2003implementation} is computationally expensive. Thus,
 the size of the problems that can be solved by SDP relaxations remains very limited. 
 From a computational perspective,
SOCP and LP relaxations are more efficient than SDP relaxations, as a result, large-sized problems can be solved by SOCP and LP
relaxations \cite{KIM2001}.

The main purpose of this paper is to propose
an efficient computational method that employs LP   and SOCP relaxations and a rescheduling method for the 
P-formulation of
the pooling problem with time discretization. The  LP relaxation of nonconvex QCQPs in this
paper is different from the linear programming based on a first-order Taylor expansion \cite{FLOUDAS09,LIBERTI06}.  
Let  $\zeta^*$ be the optimal value of a general  nonconvex QCQP that minimizes the objective function.
Among $\zeta^*$ and the optimal values of SDP, SOCP and LP relaxations of the QCQP,  we have the following relationship:
\[ \zeta_{LP}^* \leq \zeta_{SOCP}^* \leq \zeta_{SDP}^* \leq \zeta^*. \]
LP relaxations are known to be most  efficient  and SDP relaxations 
 most time-consuming among the three relaxations  for solving general QCQPs.
For our formulation of the pooling problem, we prove that the SDP, SOCP and LP relaxations   provide the equivalent
optimal value:
\begin{equation}
\zeta_{LP}^* = \zeta_{SOCP}^* = \zeta_{SDP}^* \leq \zeta^*.  \label{EQUI}
\end{equation} 
More precisely, the LP relaxation can be used to obtain the same quality of the optimal value as that of the SDP relaxation 
with much less computational efforts. 
Thus, larger pooling problems can be handled with the LP relaxation.
We theoretically prove the equivalence \eqref{EQUI} and present the computational results that support \eqref{EQUI}.
Moreover, we demonstrate that \eqref{EQUI} holds for other formulations of the pooling problem where
bilinear terms appear with no squared terms of the variables.  The LP relaxation presented in this paper can be
used to efficiently solve  different formulations of the pooling problem.

The solution obtained by the LP, SOCP and SDP relaxations of the pooling problem should be refined 
to satisfy all the constraints of the original problem. For this issue, Nishi \cite{NISHI10} proposed a method  that 
solves  a mixed-integer linear program
using the  solution obtained by the SDP relaxation, then applies an iterative procedure for the nonlinear terms, and finally
uses a nonlinear program solver to attain a local optimal solution.  In the three steps of his method,  
the nonlinear program 
solver particularly takes long computational time,  making the entire  method very time-consuming.
To reduce the computational burden caused by applying a nonlinear program solver, we propose
a rescheduling method which successively 
updates a local optimal solution by applying the SOCP or LP relaxation 
to partial  time steps of the entire time discretization.
The proposed technique significantly increases the computational efficiency of the entire method, which enables us
to solve large pooling problems. For one test instance with 1228 variables,
the rescheduling method reduced the computational time for a general nonlinear solver by  1/60.  

This paper is organized as follows: Section 2  briefly describes the pooling problem. In Section 3, we illustrate SDP, SOCP,  
and LP relaxations of the  general nonconvex QCQPs.
Section 4 includes the proof of the optimal values on the three relaxations of our formulation of the pooling problem
and discusses how the result can be applied to other formulations of the pooling problem.
In Section 5, we  
describe the proposed rescheduling methods in detail.
Section 6 presents numerical results on the test problems in \cite{NISHI10}, some large instances
and Foulds 3, 4, 5. We conclude in Section 7.


\section{The pooling problem}
We first describe the formulation of the pooling problem  with time discretization in \cite{NISHI10}, then
 present our formulation.

\begin{figure}[hbt]
\begin{center}
\vspace*{-3cm}
\includegraphics[width=13cm, height=13cm]{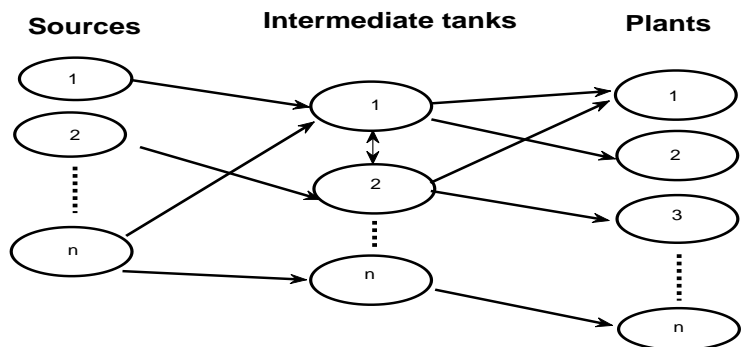} \label{fig2.1}
\end{center} \vspace*{-5.7cm}
\caption{Overview}
\end{figure}

\subsection{Notation} 
Let $M_S$, $M_I$ and $M_P$  denote the numbers of sources, intermediate tanks, and plants, respectively.
We also let $V$ be the set of all nodes.
The sets of sources, intermediate tanks and plants are denoted by $V_S$, $V_I$ and $V_P$,
respectively, as follows:
\begin{eqnarray*}
&V_S& = \{ 1, 2, \dots , M_S\}, \ \ 
V_I = \{ M_S+1, \dots , M_S+M_I\}, \  \\
&V_P& = \{ M_S+M_I+1, \dots , M_S+M_I+M_P\}, \ 
V =  V_S \cup V_I \cup V_P.
\end{eqnarray*}
The arrows in Figure \ref{fig2.1} mean pipelines.
The pipeline between $i$ and $j \in V$ is denoted as $(i, j)$, and 
the set of pipelines is denoted as $A$.
Furthermore, for the $i$th node, the set of entering nodes and that of leaving nodes are denoted, respectively, as
\begin{eqnarray*}
I(i) = \{ j \in V | (j,i) \in A\} , \ 
E(i) = \{ k \in V | (i, k) \in A\}.
\end{eqnarray*}
We use $M_T$ to mean the number of time discretization 
and each time slot can be identified by 
$t \in T = \{ 1, \dots, M_T\}$.

As our formulation is based on time discretization,  $p^{t}_{i}$  denotes the quantity stored in $i \in V$, 
and $q^{t}_{i}$  the quality of the $i$th node  at time $t$.
The flow in the pipeline $(i, j)$ is denoted as $a^{t}_{ij}$, 
and  binary variable $u^{t}_{ij}$ means whether the pipeline $(i, j)$ is used at time $t$ or not.
We also introduce $v^{t}_{i} \ (i \in V_P)$ to evaluate the quality shortage for 
the requirement at the plant $i \in V_P$, at time $t$.
The variables, constants and sets are summarized in Table~\ref{T-1}.

\subsection{Problem formulation with time discretization}


\begin{figure}[hbt]
\begin{center}
\vspace*{-3.3cm}
\includegraphics[width=13cm, height=13cm]{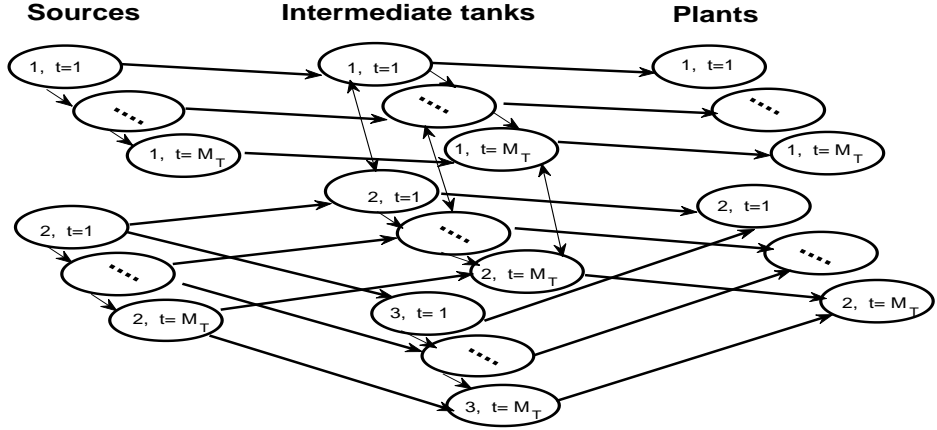}
\end{center} \vspace*{-4.5cm}
\caption{The pooling problem with time discretization}
 \label{fig2.22}
\end{figure}

The pooling problem has been represented with various  formulations.
We formulate P-formulation
with time discretization as a nonconvex mixed-integer QCQP.
As shown in Figure \ref{fig2.22}, the pooling problem with time discretization can be viewed as
a network with the arcs connecting sources, intermediate tanks, and plants at the same time step.

The objective function of the pooling problem can be modeled as follows:
\begin{eqnarray*}
\displaystyle \min_{a,p,q,v} \displaystyle \sum_{t \in T} \sum_{(i,j) \in A} CA_{ij} a_{ij}^{t} + 
 \sum_{t \in T} \sum_{i \in V_{P}} CQ_{i}RC_{i}^{t} v_{i}^{t}.
\end{eqnarray*}
Here $CA_{ij}$ is the  transportation cost for the pipeline $(i, j)$, $CQ_{i}$ the penalty cost for the shortage
 at the $i$th node,
and $RC_{i}^{t}$  the required quantity at the $i$th node.
The first  and second terms of the objective function represent the transportation cost
and the penalty cost. 

\vspace{-0.5cm}
\begin{table}[h!t]
\begin{center}
\caption{The sets, constants and variables of P formulation} \label{T-1}
\begin{tabular}{|c|c||c|c|}
\hline
\multicolumn{4}{|c|}{Sets}\\
\hline
$V_S$&the set of sources &$V_I$&the set of intermediate tanks \\
\hline
$V_P$&the set of plants&$(i,j)$&the  pipeline between $i$ and $j$\\
\hline
$I(i)$&\multicolumn{3}{c|}{the set of entering nodes to the $i$th nodes}\\
\hline
$E(i)$&\multicolumn{3}{c|}{the set of leaving nodes from the $i$th nodes}\\
\hline
\multicolumn{4}{|c|}{Constants}\\
\hline
$M_S$&the number of sources &$M_I$&the number of intermediate tanks \\
\hline
$M_P$&the number of plants&$M_{T}$&the number of time discretization \\
\hline
$p^{\min}_{i}$& the minimum quantity&$p^{\max}_{i}$&the maximum quantity\\
\hline
$SA^{t}_{i}$& the supply quantity&$SQ^{t}_{i}$&the supply quality\\
\hline
$U_{ij}$&the maximum flow&$L_{ij}$&the minimun flow \\
\hline
$RC^{t}_{i}$&the required quantity&$RQ^{t}_{i}$&the required quality\\
\hline
$CA_{ij}$&the transportation cost for $(i, j)$&$CQ_{i}$&the penalty cost \\
\hline
\multicolumn{4}{|c|}{Variables for the $i$th node at time $t$}\\
\hline
$a_{ij}^{t}$&flow in the pipeline $(i, j)$&$p_{i}^{t}$& the quantity \\
\hline
$q_{i}^{t}$&the quality &$u_{ij}^{t}$& binary variables \\
\hline
$v_{i}^{t}$& the quality shortage&\multicolumn{2}{c|}{{\text{ }}} \\
\hline
\end{tabular}
\end{center}
\end{table}

For constraints,
 $v_{i}^{t}$ is introduced  to denote the shortage in quality at the $i$th node.
If  $RQ_{i}^{t}$ is used to denote the required quality at the $i$th node, then
\begin{eqnarray*}
v_{i}^{t} = \max \{ 0, RQ_{i}^{t} - q_{i}^{t} \} \ \ \ (i \in V_{P} \ , \ t \in T).
\end{eqnarray*}
At each time $t$, each node can be connected to at most one pipeline, therefore
we must have 
\begin{eqnarray*}
u_{ij}^{t} \in \{ 0, 1\}, \  \  
\sum_{j \in I(i)}  u_{ji}^{t} +\sum_{k \in E(i)} u_{ik}^{t} \leq 1
 \ \ (i \in V,  \ (i,j) \in A, \ t \in T = \{ 1, \dots , M_T\}).
\end{eqnarray*}
The flow of each pipeline has  a lower and upper bound,
\begin{eqnarray*}
u_{ij}^{t}L_{ij} \leq a_{ij}^{t} \leq u_{ij}^{t}U_{ij} \ ((i,j) \in A , t \in T),
\end{eqnarray*}
where $L_{ij}$ and $U_{ij}$ are the lower bound and upper bound for the flow in the pipeline $(i, j)$.

Two constraints can  be derived from mixing two kinds of oil with different quantity $p_{i}^{t}$ and  quality $q_{i}^{t}$. %
For instance,
we consider mixing oil $1$ and $2$ to produce new oil $3$ in Figure \ref{fig2.2}. 
Assume that each node has the quantity, $p_{1}, p_{2}$ and $p_{3}$ and the quality, $q_{1}, q_{2}$ and $q_{3}$.
\begin{figure}[h!bt]
\begin{center}
\includegraphics[width=7cm, height=2.cm]{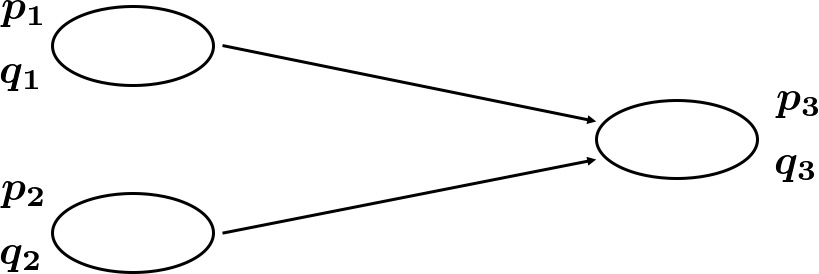} 
\end{center} \vspace*{-0.8cm}
\caption{Flow of mixing oil} \label{fig2.2}
\end{figure}
The constraint for the amount of new oil $p_{3}$ is that it
should be equivalent to the sum of  oil 1 and 2, 
{\it i.e.,} $p_{3} = p_{1} + p_{2}$. 
Second, the quality of  new oil $q_{3}$ should be computed by the weighted average of oil 1 and 2,
{\it i.e.,} $p_{3}q_{3} = p_{1}q_{1} + p_{2}q_{2}$ holds. 
Thus, necessary constraints for the pooling problem are
\begin{eqnarray*}
  p^{t+1}_{i} = p_{i}^{t} + SA_{i}^{t} - \sum_{k \in E(i)} a_{ik}^{t}, \  \ p_{i}^{t} \geq 0, \  \ p_i^{M_{T}+1} = 0 \  \
\ \ (i \in V_S, t \in T ),  \\
  p^{t+1}_{i} q_{i}^{t+1} = p_{i}^{t} q_{i}^{t}+ SA_{i}^{t}SQ_{i}^{t} - \sum_{k \in E(i)} a_{ik}^{t} q_{i}^{t} \ 
 \ \ (i \in V_S, t \in T), \\
  p^{t+1}_{i} = p_{i}^{t} + \sum_{k \in I(i)} a_{ki}^{t} - \sum_{k \in E(i)} a_{ik}^{t},  
 \  \  p_{i}^{\min} \leq p_{i}^{t} \leq p_{i}^{\max} \ 
 \ \ (i \in V_I, t \in T),  \\
  p^{t+1}_{i} q_{i}^{t+1} = p_{i}^{t} q_{i}^{t} + \sum_{k \in I(i)}a_{ki}^{t} q_{k}^{t} 
 -  \sum_{k \in E(i)} a_{ik}^{t} q_{i}^{t} \  \ \ (i \in V_I , t \in T), \\
 q_{i}^{t}= \frac{1}{RC_{i}^{t}} \sum_{j \in I(i)} a_{ji}^{t} q_{j}^{t} \ 
  \ \ (i \in V_{P}, t \in T).
\end{eqnarray*}
Here, $SA_i^t$ and $SQ_i^t$ are the supplied quantity and quality at the source  $i \in V_S$.
The constraint $p_i^{M_{T}+1} = 0$ for $ i \in V_S$ requires the quantity at the sources
should be empty at time $M_{T+1}$.

We describe the formulation of the pooling problem in \cite{NISHI10}   as follows:
\begin{eqnarray*}
(PP) &\displaystyle \min_{a,p,q,u,v}&\displaystyle \sum_{t \in T} \sum_{(i,j) \in A} CA_{ij}a_{ij}^{t} + 
 \sum_{t \in T} \sum_{i \in V_{P}} CQ_{i}RC_{i}^{t}v_{i}^{t} \\
 &{\text{subject to}}&  u_{ij}^{t}L_{ij} \leq a_{ij}^{t} \leq u_{ij}^{t}U_{ij} \ \  ((i,j) \in A, t \in T),
 \\
&&u_{ij}^{t} \in \{ 0, 1 \}, \  \  \sum_{j \in I(i)}  u_{ji}^{t} +\sum_{k \in E(i)}  u_{ik}^{t} \leq 1
 \  \ (i \in V, \ (i,j) \in A, \ t \in T), \\
 && p^{t+1}_{i} = p_{i}^{t} + SA_{i}^{t} - \sum_{k \in E(i)}a_{ik}^{t}, \  \ p_{i}^{t} \geq 0 \ , \ p^{M_{T}+1} = 0 \  \
 \ (i \in V_S, t \in T),  \\
 && p^{t+1}_{i}q_{i}^{t+1} = p_{i}^{t} q_{i}^{t}+ SA_{i}^{t}SQ_{i}^{t} - \sum_{k \in E(i)}a_{ik}^{t}q_{i}^{t} \  
  \ (i \in V_S, t \in T), \\
  && p^{t+1}_{i} = p_{i}^{t} + \sum_{k \in I(i)}a_{ki}^{t} - \sum_{k \in E(i)}a_{ik}^{t},  
 \  \ p_{i}^{\min} \leq p_{i}^{t} \leq p_{i}^{\max} \ 
  \ (i \in V_I, t \in T),  \\
 && p^{t+1}_{i}q_{i}^{t+1} = p_{i}^{t} q_{i}^{t} + \sum_{k \in I(i)}a_{ki}^{t}q_{k}^{t} 
 -  \sum_{k \in E(i)}a_{ik}^{t}q_{i}^{t} \  \ \ (i \in V_I, t \in T), \\
 &&q_{i}^{t} = \frac{1}{RC_{i}^{t}} \sum_{j \in I(i)} a_{ji}^{t}q_{j}^{t} \ 
  \ (i \in V_{P}, t \in T),\\
 && v_{i}^{t} \geq  \max \{0, RQ_{i}^{t}-q_{i}^{t} \} \   \ (i \in V_{P}, t \in T).
\end{eqnarray*}
Note that some of the above constraints are quadratic and nonconvex.
With nonconvex constraints and the binary variables
$u_{ij}^{t} \in \{0,1\}$, 
the  formulation (PP) of the pooling problem is 
a nonconvex mixed-integer nonlinear programming problem. 
Each quadratic term of the formulation of the pooling problem is always bilinear, and 
no squared terms of variables appear in the constraints.
Even if $u$ can be removed, the problem is nonconvex,
 as a result, it is difficult to apply an existing mixed-integer nonlinear programming  method.
It is known that global optimum solutions cannot be obtained within reasonable time,
since the pooling problem has been shown to be NP-hard \cite{ALFAKI12}.

\subsection*{Eliminating binary variables}
In \cite{NISHI10},  the pipeline constraints were modified 
to remove the binary variables  $u$  before applying the SDP relaxation problem.
We  briefly describe the elimination of the binary variables. 
The constraints involving the binary variables  were rewritten with $a^t_{ij}$  using the relation between
 $u^t_{ij}$ and $a^t_{ij}$. 
More precisely,  the constraints given by
\begin{eqnarray*}
&&\sum_{j \in I(i)}  u_{ji}^{t} +\sum_{k \in E(i)}  u_{ik}^{t} \leq 1
 \  \ (i \in V, \  \ t \in T),  \ Ê\  u_{ij}^{t} \in \{ 0, 1 \}  \  
  \ ((i,j) \in A, \  \ t \in T).
\end{eqnarray*}
require  that 
 at most one pipeline for all $i \in V$ and $t \in T$ should be used.
Thus, an equivalent constraint can be described in terms of $\a$ as follows:
\begin{eqnarray*} 
\sum_{j,k \in I(i), j \not= k} a_{ji}^{t}a_{ki}^{t} +
 \sum_{j,k \in E(i), j \not= k} a_{ij}^{t}a_{ik}^{t} +\sum_{j \in I(i), k \in E(i)} a_{ji}^{t}a_{ik}^{t} = 0
 \  \ (i \in V, \  \ t \in T).
\end{eqnarray*}
To remove the binary variables from 
$u_{ij}^{t} L_{ij} \leq a_{ij}^{t} \leq u_{ij}^{t}U_{ij}  \ ((i,j) \in A, \  \ t \in T)$,
the lower bound on $a^{t}_{ij}$ was modified to the following nonnegativity,
\begin{eqnarray}
0 \leq a^{t}_{ij} \leq U_{ij}  \  \ ((i,j) \in A, \  \ t \in T).  \label{RELAXBOUND}
\end{eqnarray}
As a result, the following problem is derived:
\begin{eqnarray}
&\displaystyle \min_{a,p,q,v}&\displaystyle \sum_{t \in T} \sum_{(i,j) \in A} CA_{ij}a_{ij}^{t} + 
 \sum_{t \in T} \sum_{i \in V_{P}} CQ_{i}RC_{i}^{t}v_{i}^{t} \nonumber \\
 &{\text{subject to}}&  
 \sum_{j,k \in I(i), j \not= k} a_{ji}^{t}a_{ki}^{t} +
 \sum_{j,k \in E(i), j \not= k} a_{ij}^{t}a_{ik}^{t} +\sum_{j,k \in I(i), j \not= k} a_{ji}^{t}a_{ki}^{t} = 0
 \  \ (i \in V, \  \ t \in T), \nonumber \\
&&  0   \leq a_{ij}^{t} \leq U_{ij}, \ \  \ ((i,j) \in A, \  \ t \in T)  \nonumber \\
 && p^{t+1}_{i} = p_{i}^{t} + SA_{i}^{t} - \sum_{k \in E(i)}a_{ik}^{t} \ , \ p_{i}^{t} \geq 0 \ , \ p^{M_{T}+1} = 0 \  \
\ \ (i \in V_S, t \in T),  \nonumber \\
 && p^{t+1}_{i}q_{i}^{t+1} = p_{i}^{t} q_{i}^{t}+ SA_{i}^{t}SQ_{i}^{t} - \sum_{k \in E(i)}a_{ik}^{t}q_{i}^{t} \ 
 \ \ (i \in V_S, t \in T), \nonumber \\
 && p^{t+1}_{i} = p_{i}^{t} + \sum_{k \in I(i)}a_{ki}^{t} - \sum_{k \in E(i)}a_{ik}^{t} \ 
 \ \  p_{i}^{\min} \leq p_{i}^{t} \leq p_{i}^{\max} \ 
 \ \ (i \in V_I, t \in T),  \nonumber \\
 && p^{t+1}_{i}q_{i}^{t+1} = p_{i}^{t} q_{i}^{t} + \sum_{k \in I(i)}a_{ki}^{t}q_{k}^{t}, 
 -  \sum_{k \in E(i)}a_{ik}^{t}q_{i}^{t} \  \ \ (i \in V_I, t \in T), \nonumber \\
 &&q_{i}^{t} = \frac{1}{RC_{i}^{t}} \sum_{j \in I(i)} a_{ji}^{t}q_{j}^{t} \ 
  \ \ (i \in V_{P}, t \in T),\nonumber \\
 && v_{i}^{t} \geq \max \{0, RQ_{i}^{t}-q_{i}^{t} \} \
 \ \ (i \in V_{P}, t \in T). \label{PPM}
\end{eqnarray}
While the number of constraints in  \eqref{PPM} is the same as the number
of constraints of the original pooling problem, the number of variables in \eqref{PPM}  is small   compared to
 the original pooling problem.
Thus, \eqref{PPM} can be solved more efficiently by conic relaxation methods than the original pooling problem.

\subsection{The proposed formulation}

Although the modified problem \eqref{PPM} in \cite{NISHI10} has  reduced the
 number of variables and no binary variables,
 \eqref{PPM} may not have an  interior point. 
 If SDP relaxations are used to solve problems with no interior point, as in \cite{NISHI10}, SDP solvers
 based on primal-dual interior-pont methods \cite{STURM99,TOH98,yamashita2012latest,yamashita2003implementation} frequently fail due to 
 numerical instability. To avoid such numerical difficulty, 
 we
relax the equality to inequalities. 
For instance,
we first transform  equality constraints of the form
$a^T x = b$ into $-\lambda \leq a^T x - b \leq \lambda$ introducing by
a new variable $\lambda$. Then,
we add $\lambda$ to the objective function as a penalty function.

Our formulation of the pooling problem is:
\begin{eqnarray}
&\mbox{ }& \nonumber \\
&\displaystyle \min_{a,p,q,v}&\displaystyle \sum_{t \in T} \sum_{(i,j) \in A} CA_{ij}a_{ij}^{t} + 
 \sum_{t \in T} \sum_{i \in V_{P}} CQ_{i}RC_{i}^{t}v_{i}^{t} + \delta \sum_{t \in T} \sum_{(i,j) \in A} \lambda_i^t  \nonumber \\
 &{\text{subject  to}}&  -\lambda_i^t \leq 
 \sum_{j,k \in I(i), j \not= k} a_{ji}^{t}a_{ki}^{t} +
 \sum_{j,k \in E(i), j \not= k} a_{ij}^{t}a_{ik}^{t} +\sum_{j,k \in I(i), j \not= k} a_{ji}^{t}a_{ki}^{t} \leq \lambda_i^t
 \  \ (i \in V, \  \ t \in T), \nonumber \\
&&  0   \leq a_{ij}^{t} \leq U_{ij} \ \  \ ((i,j) \in A, \  \ t \in T),  \nonumber \\
 &&-\lambda_i^t  \leq - p^{t+1}_{i} +  p_{i}^{t} + SA_{i}^{t} - \sum_{k \in E(i)}a_{ik}^{t}  \leq \lambda_i^t, \  \ p_{i}^{t} \geq 0, \  \ p^{M_{T}+1} =0 \  \
\ \ (i \in V_S, t \in T),  \nonumber \\
 && -\lambda_i^t  \leq - p^{t+1}_{i}q_{i}^{t+1} + p_{i}^{t} q_{i}^{t}+ SA_{i}^{t}SQ_{i}^{t} - \sum_{k \in E(i)}a_{ik}^{t}q_{i}^{t}  \leq \lambda_i^t \  
 \ \ (i \in V_S,\ t \in T), \nonumber \\
 && -\lambda_i^t  \leq -p^{t+1}_{i} + p_{i}^{t} + \sum_{k \in I(i)}a_{ki}^{t} - \sum_{k \in E(i)}a_{ik}^{t}  \leq \lambda_i^t,  \ 
 \  \ p_{i}^{\min} \leq p_{i}^{t} \leq p_{i}^{\max} \ 
 \ \ (i \in V_I, \  t \in T),  \nonumber \\
 && -\lambda_i^t \leq -p^{t+1}_{i}q_{i}^{t+1} + p_{i}^{t} q_{i}^{t} + \sum_{k \in I(i)}a_{ki}^{t}q_{k}^{t} 
 -  \sum_{k \in E(i)}a_{ik}^{t}q_{i}^{t}  \leq \lambda_i^t  \  \ \ (i \in V_I, \ t \in T), \nonumber \\
 &&-\lambda_i^t \leq -q_{i}^{t} + \frac{1}{RC_{i}^{t}} \sum_{j \in I(i)} a_{ji}^{t}q_{j}^{t}  \leq \lambda_i^t  \ 
  \ \ (i \in V_{P}, \ t \in T), \nonumber \\
 && v_{i}^{t} \geq \max \{0, RQ_{i}^{t}-q_{i}^{t} \} \
  \ \ (i \in V_{P}, \ t \in T), \  \ \lambda_i^t \geq 0 \ (i \in V, \ t \in T),   \label{PPO}
\end{eqnarray}
where $\delta$ is a penalty parameter. 

For the subsequent discussion,
we express \eqref{PPO} using variable $\x$ defined as
$ \x = \{ \a, \p, \q, \v \}$.
More precisely, each set of variables are ordered in the following order.
%
\begin{eqnarray*}
&\a& = \{ a^{1}, \dots , a^{M_T}\}, \  \ \\
&\a^{t}& =\{ a^{t}_{ij} \| (i,j) \in A \} \  \ (t \in T), \\
%
&\p& = \{ p^{2}, \dots, p^{M_T}, p^{M_T+1} \}, \  \ \\
&\p^{t}& =  \{ p^{t}_{i}, \ p^{t}_{j} \|  i \in  V_{S} ,\ j \in  V_{I}  \}  \  \ (t \in T \ \backslash \{ 1 \}), \  \ \\
&\p^{M_T+1}& = \{ p^{M_T+1}_{i} \| i \in V_{I} \}, \  \   \\
%
&\q& = \{ q^{1}, \dots, q^{M_T+1} \}, \  \ \\
&\q^{1}& = \{ q^{1}_{i} \| i \in V_{P} \} ,\  \  \  \ \\
&\q^{t}& =  \{ q^{t}_{i},\ q^{t}_{j} ,\ q^{t}_{k} \|  i \in V_{S} ,\ j \in V_{I} ,\ k \in V_{P} \}
 \  \ (t \in T \ \backslash \{ 1\}), \  \ \\
&\q^{M_T+1}& = \{ q^{M_T+1}_{i} \| i \in V_{I}\}, \  \   \\
%
&\v& = \{ v^{1}, \dots, v^{M_T} \}, \  \ \\
&\v^{t}& = \{ v^{t}_{i} \| i \in V_{P} \} \  \ (t \in T).
\end{eqnarray*}

Let $n$ be the length of $\x$, that is, $\x \in \Real^n_+$ where $\Real^n_+$ denotes the space of $n$-dimensional column vectors
of nonnegative numbers. We also let $\Real^n$ be the space of $n$-dimensional column vectors 
$\Real^{d\times n}$ the space of  $d \times n$ real matrices, and $\SymMat^{ n}$ the space of  $n \times n$ symmetric matrices.

Let the number of the quadratic equalities of \eqref{PPO} is $m$,
the number of linear equality constraints $d$, the number of linear inequalities $e$. 
Then, with appropriately chosen  matrices $\Q_k  \in \SymMat^n \ (k=1,\ldots,m), \ \L_{m+1} \in \Real^{d \times n}, \L_{m+2} \in \Real^{e \times n},$ and
$\q_0 \in \Real^n$, we assume that \eqref{PPM} can be written in the following general form:
\begin{eqnarray*}
&\displaystyle \min_{\x \in \mathbb{R}^{n}}&  \q_0^T\x  \\  
&{\text{subject to }}&
 \x^{T} \Q_{k}   \x + \q_{k}^T \x + \gamma_{k}  = 0 \ \ \ (k=1, \dots, m), \ \ \\
&&  \ \L_{m+1} \x = \b_{m+1},  \  \ \L_{m+2} \x \leq \b_{m+2}, \\
&& \bell \leq \x \leq \u, \
\end{eqnarray*}
where $\bell, \u \in \Real^n$  mean, respectively, the lower and upper bounds for $\x$, and  $\b_{m+1} \in \Real^d, \b_{m+2} \in \Real^e$.

Consequently, \eqref{PPO} can be expressed as the following general form:
\begin{eqnarray}
&\displaystyle \min_{\x \in \mathbb{R}^{n}, \ \lambda \in \mathbb{R}^{m+d}}&   \q_0^T\x +\delta \sum_{i=1}^{m+d} \lambda_i \nonumber  \\
&{\text{subject to }}& 
-\lambda^1_k  \leq  \x^{T} \Q_k   \x +\q_{k}^T \x + \gamma_{k} \leq \lambda^1_k \ \ \ (k=1, \dots , m), \  \  \nonumber \\
 && -\blam^2_{m+1} \leq \L_{m+1} \x - \b_{m+1} \leq \blam^2_{m+1},   \  \  \L_{m+2}  \x \leq \b_{m+2}, \  \ \nonumber \\
&& \blam \geq 0 \ , \  \bell \leq \x \leq \u, \label{PPR}
\end{eqnarray}
where $\blam =[\blam^1,\blam^2_{m+1}]^T  \in \Real^{m+d}$ and $\blam^2_{m+1} \in \Real^d$.


If we let $\q_{m+r} $ be the $r$th row of  $\L_{m+1} \ (r=1,\ldots, d)$ and
$\q_{m+d+\rho}$ be the $\rho$th row of   $\L_{m+2} \ (\rho=1,\ldots, e)$, then
 \eqref{PPR} is written as follows:
\begin{eqnarray}
&\displaystyle \min_{\x \in \mathbb{R}^{n}, \ \blam \in \mathbb{R}^{m+d}}&   \q_0^T\x +\delta \sum_{i=1}^{m+d} \lambda_i \nonumber  \\
&{\text{subject to }}& 
 \x^{T} \Q_k   \x +\q_{k}^T \x - \lambda^1_k + \gamma_{k} \leq 0,   \nonumber \\
&&  -\x^{T} \Q_k   \x -\q_{k}^T \x - \lambda^1_k    -  \gamma_{k} \leq 0 \ (k=1, \dots , m), \  \  \nonumber \\
 &&  \q_{m+r} \x  - (\blam^2_{m+1})_r - (\b_{m+1})_r \leq 0, \nonumber \\
&&  - \q_{m+r} \x  -(\blam^2_{m+1})_r +  (\b_{m+1})_r \leq 0 \   (r=1,\ldots,d), \nonumber \\ 
 & &   \q_{m+r+\rho}  \x-  (\b_{m+2})_\rho \leq  0 \  (\rho=1,\ldots,e), \ \nonumber \\
&& \blam \geq 0 \ , \  \bell \leq \x \leq \u. \label{PPR1}
\end{eqnarray}
Notice that all the diagonal elements of $\Q_1, \ldots, \Q_m$ 
in the quadratic constraints  of  \eqref{PPR1} are zeros.
This  will be exploited in Section 4.

\section{SDP, SOCP and LP relaxations} \label{REX}

We give a brief description of an SDP relaxation of  general  QCQPs which include
 \eqref{PPO}. Then, SOCP \cite{KIM2003} and LP relaxations of general QCQPs are described using the scaled diagonally dominant (SDD) matrices and diagonally dominant (DD) matrices, respectively.

\subsection{SDP relaxations}

Let $\w \in \Real^n$.
Consider a general form of  QCQP:
\begin{eqnarray}
\begin{array}{rcl}
\zeta^* :=  \min & & \w^T \Q_0 \w  +  \bbeta_0^T \w + \gamma_0 \\
\mbox{subject to} & & \w^T \Q_k \w +  \bbeta_k^T \w + \gamma_k \le 0 \quad (k=1, \ldots, m),
\end{array}\label{eq:original problem}
\end{eqnarray}
where $\Q_k \in \SymMat^n, \ \bbeta_k \in \Real^n \ (k=0,\ldots,m)$ and $ \gamma_k \in \Real   \ (k=1, \ldots, m)$.
Since $\Q_k \in \SymMat^n \ (k=0,\ldots,m)$ is not necessarily positive semidefinite, \eqref{eq:original problem} 
is a nonconvex problem.

Introducing a new variable matrix $\W \in \SymMat^n$, we let
 \[ \bar{\Q}_k := \left(\begin{array}{cc} \gamma_k & \bbeta_k^T/2 \\ \bbeta_k/2 & \Q_k \end{array}\right), \
\bar{\W} := \left(\begin{array}{cc} w_{00} & \w^T \\ \w & \W \end{array}\right),
\mbox{  and  } \bar{\H}_0 := \left(\begin{array}{cc} 1 & \0^T \\ \0 & \O \end{array}\right). \]
Then, 
an SDP relaxation of (\ref{eq:original problem}) is given by
\begin{eqnarray}
\begin{array}{rcl}
\zeta_{SDP}^* := \min & & \bar{\Q}_0 \bullet  \bar{\W} \\
\mbox{subject to} & & \bar{\Q}_k \bullet \bar{\W} \le 0  \quad (k=1, \ldots, m), \\
&& \bar{\H}_0 \bullet \bar{\W} = 1, \\
& & \bar{\W} \in \SymMat_+^{n+1},
\end{array}\label{SDP}
\end{eqnarray}
where the inner product  $\bar{\Q} \bullet \bar{\W}$ means 
the standard inner product between two symmetric matrices, {\it i.e.,}
$\bar{\Q} \bullet \bar{\W} =\sum_{i}\sum_{k} \bar{Q}_{ik}\bar{W}_{ik}$.

\subsection{SOCP relaxations}
In  \cite{KIM2003}, an SOCP relaxation was proposed using the $2 \times 2$ principle submatrices of the variable matrix
$\bar{\W}$ of \eqref{SDP}. They showed that the SOCP relaxation provides the exact optimal solution
for QCQP if the off-diagonal elements of $\bar{\Q}_k$ $(k = 0,\ldots,m)$ are nopositive.  
The SOCP relaxation in \cite{KIM2003} is closely related to the dual of the first level relaxation of 
the hierarchy of the scaled diagonally dominant sum-of-squares (SDSOS) relaxations proposed
in \cite{AAA2014}.
By applying the approach in  \cite{KIM2003}, we obtain the following SOCP relaxation:
\begin{equation}
\left.
\begin{array}{llll} 
\mbox{min }   & \bar{\Q}_0 \bullet \bar{\W} \\
\mbox{subject to } & \bar{\Q}_k \bullet \bar{\W} \leq 0 \ (1 \leq k \leq m), \ \bar{\H}_{0}\bullet \bar{\W} = 1, \\ 
                   & 
                    \bar{W}_{jj} \geq 0 \ (1 \leq j \leq n+1), \\ 
                   & (\bar{W}_{ij})^2 \leq \bar{W}_{ii}\bar{W}_{jj} \ (1 \leq i < j \leq n+1).
\end{array}
\right\} 
\label{SOCPconstraints}
\end{equation}
Using
\begin{equation} 
           w^2 \leq \xi \eta,  \ \xi \geq 0 \ \mbox{ and } \eta \geq 0 \mbox{ if and only if }
          \left\Vert \left( \begin{array}{c} \xi -\eta \\ 2 w \end{array} \right)  \right\Vert \leq \xi +\eta,
\label{SOCEQ}
\end{equation} 
 (\ref{SOCPconstraints}) is converted to an SOCP.
Thus, the following SOCP  is equivalent to the 
problem (\ref{SOCPconstraints}). 
\begin{equation}
\left.
\begin{array}{llll} 
\zeta_{SOCP}^* := \mbox{min }   & \bar{\Q}_0 \bullet \bar{\W} \\
\mbox{subject to } &\bar{\Q}_p \bullet \bar{\W} \leq 0 \ (1 \leq p \leq m), \ \bar{\H}_{0}\bullet \bar{\W} = 1, \\
                   & \left\Vert \left(\begin{array}{c} \bar{W}_{ii} - \bar{W}_{jj} \\ 2 \bar{W}_{ij}
			 \end{array} \right) \right\Vert \leq \bar{W}_{ii} + \bar{W}_{jj} 
                     \ (1 \leq i < j \leq n+1).
\end{array}
\right\} 
\label{SOCP1FINAL}
\end{equation}
Since $\bar{\W} \in \SymMat^n_+$ implies $ (\bar{W}_{ij})^2 \leq \bar{W}_{ii}\bar{W}_{jj} \ (1 \leq i < j \leq n+1)$,
the optimal value $\zeta_{SOCP}^*$ of \eqref{SOCP1FINAL} is weaker than $\zeta_{SDP}^* $ of \eqref{SDP}:
\[ \zeta_{SOCP}^* \leq  \zeta_{SDP}^*  \leq \zeta^*.\]

\subsection{LP relaxations}
We derive LP relaxations of \eqref{eq:original problem} using 
 the diagonally dominant sum-of-squares relaxation (DSOS) in \cite{AAA2014}.

Consider the cone of diagonally dominant matrices of dimension $n+1$  defined by
\begin{eqnarray*}
\DC^{n+1} := \left\{\W \in \SymMat^{n+1} : W_{ii} \ge \sum_{j \neq i} |W_{ij}| \quad (1 \leq i \leq n+1)\right\}.
\end{eqnarray*}
In \cite{Barker75}, 
the dual of $\DC^{n+1}$ is given by
\begin{eqnarray*}
(\DC^{n+1})^* & :=& \left\{\W \in \SymMat^{n+1} : \w^T \W \w \ge 0 \mbox{ for } \forall \w \mbox{ with at most 2 nonzero elements
 } 1 \mbox{ or -1} \right\} \\
 & = &  \left\{\W \in \SymMat^{n+1} : 
                    W_{ii} \geq 0 \ (1 \leq i \leq n+1), \ 
                    W_{ii} +W_{jj} -2 |W_{ij}| \geq 0 \quad (1 \leq i < j \leq n+1) \right\} 
\end{eqnarray*}
Using $(\DC^{n+1})^*$, an LP relaxation of  \eqref{eq:original problem} can be derived as
\begin{equation}
\left.
\begin{array}{llll} 
\zeta_{LP}^* := \mbox{min }   & \bar{\Q}_0 \bullet \bar{\W} \\
\mbox{subject to } & \bar{\Q}_k \bullet \bar{\W} \leq 0 \ (1 \leq k \leq m), \ \bar{\H}_{0}\bullet \bar{\W} = 1, \\ 
                   & 
                    \bar{W}_{ii} \geq 0 \ (1 \leq i \leq n+1), \\ 
                   & \bar{W}_{ii} +\bar{W}_{jj} -2 |\bar{W}_{ij}| \geq 0 \quad (1 \leq i < j \leq n+1).
\end{array}
\right\} 
\label{LPR}
\end{equation}

Let $\bar{\W}$ be a feasible solution of \eqref{SOCP1FINAL}. Then,  $ |\bar{W}_{ij}| \leq \sqrt{\bar{W}_{ii} \bar{W}_{jj}} \  \ (1 \leq i < j \leq n+1). $ 
Since $\sqrt{\bar{W}_{ii} \bar{W}_{jj} } \le (\bar{W}_{ii} + \bar{W}_{jj})/2$ always holds for all
nonnegative $\bar{W}_{ii}$ and $\bar{W}_{jj} $,
  $\bar{\W}$ is a feasible solution of \eqref{LPR}. 
Thus, the LP relaxation \eqref{LPR} is an weaker relaxation than
the SOCP relaxation \eqref{SOCP1FINAL} and
 the following relation holds for the optimal values of the three relaxations:
\begin{equation} \zeta^*_{LP} \leq \zeta^*_{SOCP} \leq  \zeta^*_{SDP}  \leq \zeta^*. \label{EQUAL} \end{equation}


\section{The equivalence of the optimal values of SDP, SOCP and LP relaxations}

Now, we  show the equivalence among the optimal values of  \eqref{SDP}, \eqref{SOCP1FINAL} and \eqref{LPR}  under the
following assumptions. As mentioned at the end of  Section 2, the pooling problem satisfies the following assumption. 

\begin{assum}\label{diag-zero}
All the diagonal elements  in $\Q_0, \Q_1, \ldots, \Q_m$  of \eqref{eq:original problem} are zeros.
\end{assum}

\begin{theorem} \label{THEOREM}
Suppose that Assumption \ref{diag-zero} holds.
Then,
$\zeta_{SDP}^* = \zeta_{SOCP}^* = \zeta_{LP}^*$.
\end{theorem}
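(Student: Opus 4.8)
Since the chain $\zeta_{LP}^* \le \zeta_{SOCP}^* \le \zeta_{SDP}^*$ is already established in \eqref{EQUAL}, it suffices to prove $\zeta_{SDP}^* \le \zeta_{LP}^*$; then all three quantities coincide. The natural way to do this is to show that any feasible point $\bar{\W}$ of the LP relaxation \eqref{LPR} can be transformed, \emph{without increasing the objective value}, into a feasible point of the SDP relaxation \eqref{SDP}. Given an LP-feasible $\bar{\W} = \begin{pmatrix} w_{00} & \w^T \\ \w & \W \end{pmatrix}$, I would try the replacement that sets the off-diagonal entries $\bar W_{ij}$ to the value that makes each $2\times 2$ principal minor vanish — but more cleanly, the candidate is the rank-one-plus-diagonal matrix
\[
\bar{\W}' := \begin{pmatrix} w_{00} & \w^T \\ \w & \w\w^T/w_{00} \end{pmatrix} + \mathrm{Diag}(0,\, \W_{11} - w_1^2/w_{00},\, \ldots,\, \W_{nn} - w_n^2/w_{00}),
\]
i.e. keep the diagonal of $\bar{\W}$ unchanged and replace the off-diagonal block by $\w\w^T/w_{00}$. (One first argues $w_{00}=1$ from the constraint $\bar{\H}_0 \bullet \bar{\W}=1$, and that the diagonal entries are nonnegative, so the correction terms are legitimate; the degenerate case $w_{00}=0$ forces $\w = \0$ by the LP constraints $\bar W_{ii}+\bar W_{jj} \ge 2|\bar W_{ij}|$ applied with the $00$ row, and can be handled separately or ruled out.)

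The key steps, in order, are: (1) record that $\bar{\W}'$ is positive semidefinite, since it is a sum of the PSD rank-one matrix $\binom{1}{\w}\binom{1}{\w}^T$ and a nonnegative diagonal matrix — this uses only that $\bar W_{ii} \ge 0$ and $\bar W_{ii} \ge w_i^2$ (the latter from the LP constraint $\bar W_{00} + \bar W_{ii} \ge 2|\bar W_{0i}|$ together with $\bar W_{00}=1$, $\bar W_{0i}=w_i$, giving $1 + \bar W_{ii} \ge 2|w_i|$, hence... here is the first subtlety, see below); (2) check $\bar{\H}_0 \bullet \bar{\W}' = w_{00} = 1$; (3) check each quadratic constraint $\bar{\Q}_k \bullet \bar{\W}' \le 0$ and the objective $\bar{\Q}_0 \bullet \bar{\W}'$ — and this is exactly where Assumption \ref{diag-zero} is used: because $\Q_k$ has zero diagonal, $\bar{\Q}_k \bullet \bar{\W}$ depends only on the $00$-entry (through $\gamma_k$), the $0i$-entries (through $\bbeta_k$), and the \emph{off-diagonal} entries of $\W$ (through $\Q_k$), never on the diagonal of $\W$; since $\bar{\W}'$ and $\bar{\W}$ share the same $00$-entry and same $0i$-entries, one only needs $\bar{\Q}_k \bullet \bar{\W}' \le \bar{\Q}_k \bullet \bar{\W}$, which would follow if the off-diagonal block we substituted dominates termwise in the right direction. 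This last point is the crux and needs care.

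\textbf{The main obstacle.} Replacing the off-diagonal block $\W$ by $\w\w^T/w_{00}$ does \emph{not} generally decrease every inner product $\bar{\Q}_k \bullet \bar{\W}$, because the signs of the entries of $\Q_k$ vary. The honest fix is a two-step argument: first pass from $\bar{\W}$ (LP-feasible) to an \emph{SOCP-feasible} matrix, then from SOCP-feasible to SDP-feasible; or, better, exploit the structure more carefully. In fact the cleanest route is to observe that for the SOCP/SDP comparison the relevant fact is $\bar W_{ij}^2 \le \bar W_{ii}\bar W_{jj}$, and to show directly $\zeta_{SDP}^* \le \zeta_{SOCP}^*$ by the same diagonal-adjustment trick restricted to making each $2\times 2$ minor of the off-diagonal part consistent — but the genuinely new content is $\zeta_{SOCP}^* \le \zeta_{SDP}^*$ combined with showing the LP relaxation already forces the SOCP constraints to hold with equality-type tightness \emph{on the entries that matter}. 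Concretely: given LP-feasible $\bar{\W}$, define $\bar{\W}'$ by keeping $w_{00}$, keeping all $\bar W_{0i} = w_i$, keeping all \emph{diagonal} $\bar W_{ii}$, and choosing each off-diagonal $\bar W_{ij}$ ($i,j \ge 1$) to be whatever value in $[-\sqrt{\bar W_{ii}\bar W_{jj}},\,\sqrt{\bar W_{ii}\bar W_{jj}}]$ makes the Schur complement PSD (e.g. $\bar W_{ij} := w_i w_j / w_{00}$); then Step (3) reduces to checking that each individual product $(\Q_k)_{ij}\bar W_{ij}$ did not increase. Since $(\Q_k)_{ij}$ can have either sign, this forces the conclusion to run instead through the observation that the LP relaxation value equals the SOCP relaxation value \emph{on this class of problems} because the LP constraint set, intersected with the affine constraints that only see $w_{00}, \w$ and off-diagonal $\Q_k$-weighted sums, has the same optimal value — which I expect the authors to nail down by exhibiting, for an optimal LP solution, an explicit SOCP-feasible (indeed SDP-feasible) solution with the same objective, built exactly by the rank-one completion $\W := \w\w^T/w_{00}$ and then \emph{lowering} the diagonal entries to $\bar W_{ii} := w_i^2/w_{00}$ (permissible since diagonals are free — they appear in no constraint and in no objective under Assumption \ref{diag-zero}, except through the $i=0$ row, and lowering them keeps $\bar W_{ii} \ge 0$ provided $w_i^2/w_{00} \le \bar W_{ii}$, which is the LP constraint $\bar W_{ii} + w_{00} \ge 2|w_i|$ rearranged via AM–GM as... again the inequality $1+\bar W_{ii}\ge 2|w_i|$ only gives $\bar W_{ii}\ge (|w_i|-1)^2 \cdot$-type bounds, not $\bar W_{ii}\ge w_i^2$). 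The resolution is that we need \emph{not} $\bar W_{ii} \ge w_i^2$ but rather we are free to \emph{redefine} $\bar W_{ii} := w_i^2/w_{00}$ outright, because no constraint of \eqref{LPR} other than $\bar W_{ii}\ge 0$ and $\bar W_{ii}+\bar W_{jj} \ge 2|\bar W_{ij}|$ touches $\bar W_{ii}$, and with $\bar W_{ij}:=w_iw_j/w_{00}$ the latter becomes $w_i^2/w_{00} + w_j^2/w_{00} \ge 2|w_i||w_j|/w_{00}$, true by AM–GM. Thus the one real obstacle — sign-indefiniteness of $\Q_k$ — never arises once we commit to the rank-one completion of the \emph{whole} matrix (diagonal included), and Assumption \ref{diag-zero} is what guarantees this global replacement leaves every $\bar{\Q}_k\bullet\bar{\W}$ (and the objective) exactly unchanged, since $\bar{\Q}_k\bullet\bar{\W} = \gamma_k w_{00} + \bbeta_k^T\w + \Q_k\bullet\W$ and the last term, for zero-diagonal $\Q_k$, equals $\Q_k \bullet (\w\w^T/w_{00})$ automatically once off-diagonals are set to $w_iw_j/w_{00}$. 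I would therefore structure the proof as: take optimal LP solution, build $\bar{\W}'$ by rank-one completion, verify PSD, verify all constraints unchanged via Assumption \ref{diag-zero}, conclude $\zeta_{SDP}^* \le \zeta_{LP}^*$, and combine with \eqref{EQUAL}.
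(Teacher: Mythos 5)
There is a genuine gap, and it sits exactly at the point you flagged as ``the main obstacle'' and then talked yourself out of. Your final construction replaces the off-diagonal block $\W$ by the rank-one completion $\w\w^T/w_{00}$ (keeping $w_{00}$ and $\w$). But Assumption \ref{diag-zero} only says the \emph{diagonal} entries of $\Q_k$ vanish; the off-diagonal entries of $\Q_k$ are precisely the coefficients of the bilinear terms of the pooling problem and are nonzero. Since
\[
\bar{\Q}_k \bullet \bar{\W} \;=\; \gamma_k w_{00} + \bbeta_k^T \w + \sum_{i \neq j} (\Q_k)_{ij} W_{ij},
\]
the constraint values and the objective \emph{do} depend on the off-diagonal entries $W_{ij}$, which in an optimal LP solution need not equal $w_i w_j / w_{00}$. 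Your closing claim that the last term ``equals $\Q_k \bullet (\w\w^T/w_{00})$ automatically once off-diagonals are set to $w_i w_j/w_{00}$'' is circular: it describes the value \emph{after} the substitution, not the value of the LP-feasible point you started from, so feasibility ($\bar{\Q}_k \bullet \bar{\W}' \le 0$) and equality of objective values are not established. Your earlier ``keep the diagonal, fix the off-diagonals'' candidate fails for the same reason, and in addition its diagonal correction $W_{ii} - w_i^2/w_{00}$ can be negative (as you yourself noted, the LP constraints do not give $W_{ii} \ge w_i^2$).

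The paper's proof makes the opposite move, and it is the one that works: keep \emph{all} off-diagonal entries of $\bar{\W}$ exactly as they are, and add a large constant $\alpha \ge \lambda_{\max}\bigl(\w\w^T/w_{00} - \W\bigr)$ to the diagonal of the lower-right block, producing $\left(\begin{smallmatrix} w_{00} & \w^T \\ \w & \W + \alpha \I \end{smallmatrix}\right)$. By the Schur complement this matrix is positive semidefinite, and because Assumption \ref{diag-zero} kills the diagonal of every $\Q_k$ (including $\Q_0$), the perturbation $\alpha \I$ is invisible to every constraint $\bar{\Q}_k \bullet \bar{\W} \le 0$, to $\bar{\H}_0 \bullet \bar{\W} = 1$, and to the objective. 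Hence every LP-feasible point yields an SDP-feasible point with the same objective value, giving $\zeta_{SDP}^* \le \zeta_{LP}^*$, which combined with \eqref{EQUAL} proves the theorem. In short: the diagonal of $\W$ is the only part of the matrix that is free under Assumption \ref{diag-zero}, so the repair must be confined to the diagonal; your proposal instead edits the entries that carry all the constraint information.
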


\begin{proof}
Let $\bar{\W}= \left(\begin{array}{cc} w_{00} & \w^T \\ \w & \W \end{array} \right)$ be a feasible solution of \eqref{LPR}. 
It always holds that  $w_{00} = 1$ by the constraint $\bar{\H}_0 \bullet \bar{\W} = 1$.
If we add a sufficiently large 
number $\alpha \ge \lambda_{\max}\left(\frac{\w \w^T}{w_{00}} - \W \right)$
 to the diagonal of $\bar{\W}$ except the first diagonal element $w_{00}$ of $\bar{\W}$,
the resulting 
matrix $\left(\begin{array}{cc}  w_{00}  & \w^T \\ \w & \W +\alpha \I \end{array}\right)$
becomes positive semidefinite  by the Schur complement. Here $\lambda_{\max}$ means the largest eigenvalue.
 The inequality constraints, however, still hold and 
the objective value remains same, 
since the diagonal elements in $\Q_0, \ldots, \Q_m$ are zeros by Assumption \ref{diag-zero}. Thus,  
$\left(\begin{array}{cc} w_{00}  & \w^T \\ \w & \W +\alpha \I \end{array}\right)$
is a feasible solution of the SDP relaxation
\eqref{SDP}. 
Therefore, we can construct a feasible solution in the SDP relaxation whose objective value
is same as $\bar{\W}$, and this leads to $\zeta_{SDP}^* \leq \zeta_{LP}^*$.
In view of this with  \eqref{EQUAL},
the desired result $\zeta_{SDP}^* = \zeta_{SOCP}^* = \zeta_{LP}^*$  follows.
\end{proof}

From Theorem~\ref{THEOREM}, 
we show the relationship among the optimal values of the primal and dual problems in the subsequent discussion.
The dual of (\ref{SDP}) can be written as
\begin{eqnarray}
\begin{array}{rcl}
\mu_{SDP}^*: = \max && \mu  \\
\mbox{subject to} && \bar{\Q}_0 + \sum_{k=1}^m \eta_k \bar{\Q}_k 
- \mu \bar{\H}_0 - \bar{\S} = \O, \\
& & \eta_1, \ldots, \eta_m \ge 0,\  \mu \in \Real, \\
& &  \bar{\S} \in \SymMat_+^{n+1}.
\end{array}\label{SDPD}
\end{eqnarray}
By Assumption~\ref{diag-zero}, this problem has no interior point.
Thus, the positive duality gap 
between $\zeta_{SDP}^*$ and $\mu_{SDP}^*$ might exist as  the Slater condition does not hold.
In the following Corollary~\ref{COROL}, we show  that there is no duality gap, that is, 
$\zeta_{SDP}^* = \mu_{SDP}^*$, using the dual of 
the SOCP relaxation \eqref{SOCP1FINAL} and
the LP relaxation \eqref{LPR}.
These dual problems are closely related to scaled diagonally dominant sum of squares (SDSOS) and diagonally dominant
sum of squares (DSOS) in \cite{AAA2014}.

In \cite{AAA17}, SDSOS relaxations were proposed using SDD matrices.
A matrix $\B \in \SymMat^{n}$ is
SDD if and only if it can be expressed as
\[ \B = \sum_{i =1}^{n-1} \sum_{ j = i+1}^n \B^{ij}, \]
where the nonzero elements of $ \B^{ij} \in \SymMat^{n}$ are from the $2 \times 2$ principal submatrix of a positive semidefinite matrix
$\C \in \S^n_+$ with
$i$th and $j$th rows and columns of $\C$ and all the other
elements of $ \B^{ij}$ are zero. 
More precisely, $\B^{ij}$ is a symmetric matrix  with nonzero elements 
only in $(i,i)$th, $(i,j)$th, $(j,i)$th and $(j,j)$th positions such that
 $\left[ \begin{array}{cc} (\B^{ij})_{ii} & (\B^{ij})_{ij}  \\ (\B^{ij})_{ij}  & (\B^{ij})_{jj}  \end{array} \right] \in \SymMat^{2}_+$.
 Thus, each $\B^{ij} \in \SymMat^n$ is positive semidefinite. 
Let $\SC \DC^{n}$ be the cone of SDD matrices.
It is well-known that any DD matrix is SDD,
therefore, 
$\DC^{n} \subset \SC \DC^{n} \subset \SymMat_+^{n}$ holds.

Replacing $\bar{\S} \in \SymMat_+^{n+1}$ in \eqref{SDPD} by $\bar{\S} \in \SC \DC^{n+1}$ corresponds to
the first level of the hierarchy of SDSOS relaxation for QCQPs  in \cite{AAA17}, and it is the dual of
 \eqref{SOCP1FINAL}: 
\begin{eqnarray}
\begin{array}{rcl}
\mu_{SOCP}^*: = \max && \mu  \\
\mbox{subject to} && \bar{\Q}_0 + \sum_{k=1}^m \eta_k \bar{\Q}_k 
- \mu \bar{\H}_0 - \bar{\S} = \O, \\
& & \eta_1, \ldots, \eta_m \ge 0,\  \mu \in \Real, \\ 
& & \bar{\S} \in \SC\DC^{n+1}. \ \end{array} \label{SOCPD}
\end{eqnarray}

The dual of \eqref{LPR} is given as
\begin{eqnarray}
\begin{array}{rcl}
\mu_{LP}^*: =  \max && \mu  \\
\mbox{subject to} & & \bar{\Q}_0 + \sum_{k=1}^m \eta_k \bar{\Q}_k 
- \mu \bar{\H}_0 - \bar{\S} = \O, \\
& & \eta_1, \ldots, \eta_m \ge 0, \ \mu \in \Real,\\
& &  \bar{\S} \in \DC^{n+1}.
\end{array}\label{LPD}
\end{eqnarray}
In general,
\begin{equation} \mu_{SDP}^* \geq \mu_{SOCP}^* \geq \mu_{LP}^* \label{DEQUAL} \end{equation}
holds from $\DC^{n+1} \subset \SC \DC^{n+1} \subset \SymMat_+^{n+1}$.

We will show that the primal problems and the dual problems attain the same optimal values, and this indicates that there is no duality gap 
between the SDP relaxation \eqref{SDP} and \eqref{SDPD}.


\coro \label{COROL}
Under Assumption \ref{diag-zero}, 
it holds that 
\begin{eqnarray*}
\zeta^*_{SDP} = \zeta^*_{SOCP} = \zeta^*_{LP} = \mu_{LP}^* = \mu_{SOCP}^* = \mu_{SDP}^*.
\end{eqnarray*}
\ecoro
\begin{proof} 
For \eqref{SDPD},
we let 
$\bar{\S} = \left(\begin{array}{cc} s_{00} & \s^T \\ \s & \S \end{array}\right)$.
From Assumption \ref{diag-zero}, the diagonal of $\S$ is zero.
Since $\S \in \SymMat^n_+$,  we have $\S=\O$, thus,  $\bar{\S} \in \SymMat^{n+1}_+$ leads to $\s = \0$.
Hence, $\eqref{SDPD}$ is equivalent to the following problem:
\begin{eqnarray}
\begin{array}{rcl}
\max & & \mu  \\
\mbox{subject to} & & \gamma_0 + \sum_{k=1}^m \eta_k \gamma_k - \mu - s_{00} \ge 0, \\
& & \q_0 + \sum_{k=1}^m \eta_k \q_k = \0, \\
& & \Q_0 + \sum_{k=1}^m \eta_k \Q_k = \O, \\
& & \eta_1, \ldots, \eta_m \ge 0, \ \mu \in \Real, \ s_{00} \ge 0.
\end{array} \label{SDPD2}
\end{eqnarray}
Similarly, for \eqref{LPD}, we can show that $\S = \O$ and $\s = \0$  using the zero diagonal of $\bar{\S}$.
As a result, \eqref{LPD} is equivalent to \eqref{SDPD2},
and the optimal values of  \eqref{LPD} and \eqref{SDPD2} coincide, i.e., $\mu_{SDP}^*= \mu_{LP}^*$. 
Since the duality theorem holds on linear programming problems regardless of the existence of interior points, 
the optimal values of \eqref{LPR} and \eqref{LPD} are equivalent, {\it i.e.,} $\zeta_{LP}^* = \mu_{LP}^*$.
%
By $\mu_{SDP}^*= \mu_{LP}^* = \zeta_{LP}^*$, \eqref{EQUAL},
\eqref{DEQUAL} and Theorem~\ref{THEOREM},
the desired result follows. 


 \end{proof}

\section{Computational methods}

In this section, we discuss two computational methods  for adjusting and refining an approximate solution obtained by
the SOCP or LP relaxation of the pooling problem.
 As the pooling problem is NP-hard, only approximate solutions
 can be obtained by the relaxation methods. In addition,
 the bounds for the variables of the pooling problem have been modified when 
 binary variables have been removed in
 \eqref{RELAXBOUND}. 
 As a result,  an approximate solution by SOCP or LP relaxation may not be a solution to the original problem.  
 
 In Nishi's method  \cite{NISHI10},
 a mixed-integer linear program was first solved for finding a feasible solution of the original pooling problem.
 Then, 
 {\tt fmincon} in Matlab,  a nonlinear programming solver, was applied to find a local optimum solution. This step
 turned out to be very time-consuming. To improve the computational efficiency for finding a  solution that satisfies the plant requirements,
 we propose a rescheduling method based on  successive refining the solution obtained by solving 
the SOCP or LP problem.

 Nishi \cite{NISHI10}'s method can be described as follows:  \\
 \medskip
\centerline{\fbox{\parbox{\textwidth}{
{\bf Algorithm 5.1}: {\bf Nishi's method}
\begin{description}
\item[Step 1.] Solve an SDP relaxation  of the pooling problem \eqref{PPM}.
\item[Step 2.] Apply a procedure called FFS (finding feasible solution)
 to find a feasible solution $\x$. 
\item[Step 3.] Use a general nonlinear programming solver ({\tt fmincon}) starting from $\x$ for a local optimal solution.
\end{description}
}}}
We note that the feasible solution $\x$ obtained in Step 2 is not necessarily a local minimum of the original problem.
Step 3  is very time-consuming, as shown in numerical results in Section \ref{NUMER}.

In our method,  the SOCP or LP relaxation is used instead of the SDP relaxation.
We also propose a rescheduling method for Step 3 of Nishi's method.
More precisely, after applying applying the SOCP or LP relaxation and FFS,  which is described in Section \ref{FFS} in detail,
an approximate solution is further refined by the rescheduling method.
The main steps of our  method
is described as follows: \\[5pt]
\centerline{\fbox{\parbox{\textwidth}{
{\bf Algorithm 5.2}: {\bf The proposed method}
 \begin{description}
\item[Step 1.] Solve the SOCP or  LP relaxation \eqref{LPR} of the pooling problem.
\item[Step 2.] Apply  FFS 
to obtain a feasible solution $\x$. 
\item[Step 3.] Perform the proposed rescheduling method. 
\end{description}
}}}
We briefly review FFS  \cite{NISHI10} in Section  \ref{FFS}  and describe our proposed rescheduling method in Section \ref{RES}.

\subsection{A method for finding a feasible solution} \label{FFS}

As a solution attained by the SDP, SOCP or LP relaxation is not necessarily feasible for the original problem,
the following mixed-integer linear problem  was introduced to find a feasible solution in \cite{NISHI10} as the first step of the procedure  FFS. More precisely,
 the solution $(\bar{\p}, \bar{\q})$ obtained by the relaxation methods is used for the following problem called FFS1:
\begin{eqnarray}
\text{FFS1
 $(\bar{\p}, \bar{\q})$:=} &\displaystyle \min_{\a,\p,\q,\v,\u,\s}& \ \alpha \sum_{t \in T} \sum_{i \in V} s_{i}^{t} 
+ \sum_{t \in T} \sum_{(i, j) \in A} CA_{ij}a_{ij}^{t} +\sum_{t \in T} \sum_{i \in V_{P}} v_{i}^{t} \nonumber \\
&\text{s.t. }& -\s \leq \p - \bar{\p} \leq \s, \nonumber \\ 
&& p^{t+1}_{i} = p^{t}_{i} + SA^{t}_{i} - \sum_{k \in E(i)} a_{ik}^{t}, \  \ p^{t}_{i} \geq 0
 \ (i \in V_{S}, t \in T), \nonumber \\
 && p^{t+1}_{i} = p^{t}_{i} + \sum_{j \in I(i)} a_{ji}^{t} - \sum_{k \in E(i)} a_{ik}^{t} \ (i \in V_I, t \in T), \nonumber \\
&&p_{i}^{\min} \leq p_{i}^{t} \leq p_{i}^{\max} \ (i \in V_{I}, t \in T), \nonumber \\
&& RC_{i}^{t} = \sum_{j \in I(i)} a_{ji}^{t} \ , \ RC_{i}^{t}q_{i}^{t} = \sum_{j \in I(i)} a_{ji}^{t}\bar{q}^{t}_{j}
 (i \in V_{P}, t \in T), \nonumber \\
&&q_{i}^{t} \geq RQ_{i}^{t} - v_{i}^{t} \ , \ v_{i}^{t} \geq 0 \  (i \in V_{P}, t \in T), \nonumber \\
&& u^{t}_{ij} L_{ij} \leq a_{ij}^{t} \leq u^{t}_{ij}U_{ij} \ , \ u_{ij}^{t} \in \{ 0, 1\} \ ((i, j) \in A , t \in T), \nonumber \\
&& \sum_{j \in I(i)} u_{ji}^{t} + \sum_{k \in E(i)} u_{ik}^{t}  \leq 1 \ (i \in V, t \in T), \label{FFSPR}
\end{eqnarray}
where $\alpha$ denotes a weight coefficient for  $\parallel \p - \bar{\p} \parallel_{1}$ in the objective function.
We note that \eqref{FFSPR} is a mixed-integer linear programming problem, 
thus it is computationally efficient to solve
\eqref{FFSPR}. 

After solving \eqref{FFSPR}, a procedure FFS2
 to further refine a feasible solution of the pooling problem is employed 
using the solution of  \eqref{FFSPR} in  \cite{NISHI10}. More precisely,
using the output  $(\hat{\a}, \hat{\u}, \hat{\p}, \hat{\q}, \hat{\v})$ of \eqref{FFSPR},
FFS2 produces $\tilde{\q}$ and $\tilde{\v}$ 
by
\begin{eqnarray}
&&\left\{ \begin{array}{l}
\tilde{q}_{i}^{1} = q_{i}^{1}\nonumber \\
\tilde{q}_{i}^{t+1} = (\hat{p}_{i}^{t}\hat{q}_{i}^{t} + SA_{i}^{t}SQ_{i}^{t} -
\sum_{k \in E(i)} \hat{a}_{ik}^{t}\tilde{q}_{i}^{t})/\hat{p}_{i}^{t+1}\ \ \ (i \in V_{S}, t \in T)
\end{array}\right. \nonumber \\
&&\left\{ \begin{array}{l}
\tilde{q}_{i}^{1} = q_{i}^{1} \nonumber \\
\tilde{q}_{i}^{t+1} = (\hat{p}_{i}^{t}\hat{q}_{i}^{t} + \sum_{j \in E(i)} \hat{a}_{ji}^{t}\tilde{q}_{j}^{t}
- \sum_{k \in E(i)} \hat{a}_{ik}^{t}\tilde{q}_{i}^{t})/\hat{p}_{i}^{t+1} \ \ \ (i \in V_I, t \in T)
\end{array}\right. \nonumber \\
&&\left\{ \begin{array}{l}
\tilde{q}_{i}^{t} = (\sum_{j \in I(i)} \hat{a}_{ji}^{t}\tilde{q}_{j}^{t})/RC_{i}^{t} \ \ \  (i \in V_{P}, t \in T) \\
\tilde{v}_{i}^{t} = \max \{ 0, RQ_{i}^{t} - \tilde{q}_{i}^{t}\}.
\end{array}\right. \label{REFINE}
\end{eqnarray}
Consequently, a solution
$(\hat{\a}, \hat{\p}, \tilde{\q}, \hat{\u}, \tilde{\v})$ of the pooling problem is attained. 
Notice that
solving  \eqref{FFSPR} determines $(\hat{\a}, \hat{\u}, \hat{\p})$ using $\bar{\p}, \bar{\q}$,
and   a feasible solution $(\tilde{\q}, \tilde{\v})$ is obtained  by \eqref{REFINE} using  $(\hat{\a}, \hat{\u}, \hat{\p})$.
We also note that the equations in \eqref{REFINE} involves nonlinear terms.
It should be mentioned that 
a feasible solution $(\hat{\a}, \hat{\p}, \tilde{\q}, \hat{\u}, \tilde{\v})$ 
for the pooling problem does not necessarily satisfy 
the quality requirement $\tilde{v}_i^t = 0$. 
For this requirement, we refine the solution by a rescheduling method.

\subsection{A rescheduling method} \label{RES}

We propose a rescheduling method to refine the obtained solution $(\hat{\a}, \hat{\p}, \tilde{\q}, \hat{\u}, \tilde{\v})$ from the procedure
in Section \ref{FFS}. 
 The rescheduling method is based on successive refinement of the solution. The algorithm continues until  all requirements are satisfied, {\it i.e.,}
 $v_i^{t}=0$ $(i \in V_P, t \in T)$, 
or it is determined that the successive refinement cannot satisfy the plant requirements
in Step 3.6.

We denote the starting time step for the rescheduling method as  $\hat{t}$. 
The rescheduling algorithm   for Step 3 of Algorithm 5.2 is described as follows: \\[5pt]
\centerline{\fbox{\parbox{\textwidth}{
{\bf Algorithm 5.3}: {\bf The proposed rescheduling method}
\begin{description}
\item[Step 3.1.] Initialize $\hat{t}=0$. Set  $\x^*$ as the zero vector 
 of dimension $n$, the length of $(\a, \p, \q, \u, \v)$.
\item[Step 3.2.] Formulate the pooling problem with discretized time step $\hat{T} = \{ \hat{t}+1, \dots , M_{T} \}$.
\item[Step 3.3.] Solve    SOCP  \eqref{SOCP1FINAL} (or LP \eqref{LPR})   relaxation formulated for $\hat{T}$, apply FFS \eqref{FFSPR} and \eqref{REFINE}
to obtain a feasible solution $\x^{+}= (\hat{\a}, \hat{\p}, \tilde{\q}, \hat{\u}, \tilde{\v})$.
\item[Step 3.4.] If  $\x^{+}$ satisfies the requirement $\tilde{v}_i^{t}=0$ for all  $(i, t) \in V_P \times \hat{T}$, then replace $\x^*$ 
with $\x^{+}$ for $t \in \hat{T}$, output $\x^*$
and terminate. 
\item[Step 3.5.] Find the smallest time step $t^+$ such that $\tilde{v}_{i}^{t^+} > 0$ for some $i \in V_{P}$.
\item[Step 3.6.] Modify $\x^{+}$ as $\breve{\x} = (\breve{\a}, \breve{\p}, \breve{\q}, \hat{\u}, \breve{\v})$
and replace $\x^*$ with $\breve{\x}$ 
for the time steps $\{ \hat{t}+1, \dots , t^{+} \}$.
\item[Step 3.7.] Let $\hat{t} = t^{+}$. Return to Step 3.2. (If $\hat{t} = M_{T}$, output $\x^*$ and stop.)
\end{description}
}}}
Note that the size of the relaxation problem 
solved in Step~3.3 will become smaller  as $\hat{t}$ approaches to $M_T$.
Steps 3.2 and 3.3 can be skipped at $\hat{t}=0$ as Steps 1 and 2 of Algorithm 5.2 have been performed.

Step 3.6 plays an important role for the overall performance of the rescheduling method, in particular, to successfully find a solution to the pooling problem.
At Step 3.6, $\x^{+} = (\hat{\a}, \hat{\p}, \tilde{\q}, \hat{\u}, \tilde{\v})$ is modified for the time steps $\{ \hat{t}+1, \dots , t^{+} \} $ 
 to obtain a solution $\breve{\x}= 
(\breve{\a}, \breve{\p}, \breve{\q}, \breve{\u}, \breve{\v})$ 
which satisfies the plant demand in the time steps $\{ \hat{t}+1, \dots , t^{+} \}$. 

More precisely, in Step~3.6, 
for each $t \in \{ \hat{t}+1, \dots , t^{+} \}$, 
 we first check whether $\tilde{q}_{i}^{t} \  (i \in V_{P})$ satisfies the plant requirements, {\it i.e.}, $\tilde{v}_i^t =0$ or not.
Depending on the computed values of $\tilde{q}^{t}_{i} \ (i \in V_{P})$, we consider two cases:
(Case I) if $\tilde{q}^{t}_{i} \geq RQ^{t}_{i}$ holds for all $i \in V_{P}$, it means that  we have excessive supplies,
(Case II) if there exists some $i \in V_{P}$ that satisfies $\tilde{q}^{t}_{i} < RQ^{t}_{i}$, then 
it means  shortage in supplies. 

For (Case I), we modify the requirement $\breve{q}_i^t$ by $RQ_i^t$ at  plants $i \in V_P$ and
time $t$, 
to reduce excessive supplies so that   
 more quantities can be available  at the intermediate tanks   
 for the subsequent 
 modifications in ${\hat{t}  +1 ,\ldots, t^+}$. 
This modification on  the requirement at plant $i \in V_P$ in turn
affects the intermediate tanks $j \in V_I$ along with the network arcs  determined by
$\tilde{u}_{ij}^t = 1$ in FFS1, and 
to the source $k \in V_S$.
The algorithm for (Case I) is described as Algorithm~5.4(I), which 
is employed as Step~3.6 in Algorithm~5.3.\\[5pt]
\centerline{\fbox{\parbox{\textwidth}{
{\bf Algorithm 5.4(I) [The case of excessive supplies]}: 
\newline
For the time step $\hat{t}$, 
let $\breve{p}_i^{\hat{t}} = \hat{p}_i^{\hat{t}}$ and
$\breve{q}_i^{\hat{t}} = \tilde{q}_i^{\hat{t}} \ (i \in V_{S} \cup V_{I})$.
\newline
For each $t = \hat{t}+1,\ldots, t^+$, apply the following steps:
\begin{description}
\item[Step 3.6.(I)1. ] For $i \in V_{P}$, set $\breve{q}^{t}_{i}=RQ^{t}_{i}$ and $\breve{v}^{t}_{i}=0$.
\item[Step 3.6.(I)2. ]
For $j \in V_I$ and $i \in V_{P}$ such that $(j,i) \in A$,
if $\tilde{u}_{ji}^t = 1$, 
compute $\breve{a}^{t}_{ji} = \hat{a}^{t}_{ji} - RC^{t}_{i}(\tilde{q}^{t}_{i} - RQ^{t}_{i})/\breve{q}^{t}_{j}  \                           
$,
otherwise 
set $\breve{a}^{t}_{ji} = \hat{a}^{t}_{ji}$.
\item[Step 3.6.(I)3. ]
For $i \in V_{P}$, compute   
$\breve{p}^{t+1}_{i} = \hat{p}^{t+1}_{i}$ and
$ \breve{q}^{t+1}_{i} = \frac{1}{RC^{t}_{i}} \sum_{j \in I(i)} \breve{a}^{t}_{ji}\breve{q}^{t}_{j}.$
\item[Step 3.6.(I)4. ] For $j \in V_I$, compute
$\breve{p}^{t+1}_{j} = \breve{p}^{t}_{j} - \breve{a}^{t}_{ji}$ and  $ \breve{q}^{t+1}_{j} = \breve{q}^{t}_{j}$. 
\item[Step 3.6.(I)5. ] 
For $k \in V_S$, set $\breve{p}_k^{t+1} = \breve{p}_k^{t}$ and  
 $\breve{q}_k^{t+1} = \breve{q}_k^{t}$.
\item[Step 3.6.(I)6. ] 
For $k \in V_S$ and $j \in V_I$ such that $(k,j) \in A$,
if $\tilde{u}_{kj}^t = 1$,
compute
$\breve{a}^{t}_{kj} = \min \{ U_{kj},\ \breve{p}^{t}_{k}, \ p^{\max}_{j} - \breve{p}^{t}_{j}  \}$, 
and adjust 
 $\breve{p}^{t+1}_{k}, \ \breve{q}^{t+1}_{k},\
\breve{p}^{t+1}_{j}$ and $\breve{q}^{t+1}_{j}$  
by
\begin{eqnarray*}
&&\hspace*{-30pt}\breve{p}^{t+1}_{k} = \breve{p}^{t}_{k} - \breve{a}_{kj}^{t},\ \breve{q}^{t+1}_{k} = \breve{q}^{t}_{k} 
   \ 
 \mbox{(if $\breve{p}^{t+1}_{k}=0$, then set $\breve{q}^{t+1}_{k} =0),$}  \label{eq4}\\
&&\hspace*{-30pt}\breve{p}^{t+1}_{j} = \breve{p}^{t}_{j} + \breve{a}_{kj}^{t},\ 
\breve{q}^{t+1}_{j} = (\breve{p}^{t}_{j}\breve{q}^{t}_{j} + \breve{a}^{t}_{kj}\breve{q}^{t}_{k})/\breve{p}^{t+1}_{j}. \label{eq5}
\end{eqnarray*}
\item[Step 3.6.(I)7. ] 
For $(i,j) \in A$ such that $\tilde{u}_{ij}^t = 0$,
set $\breve{a}_{ij}^t = 0$.
\item[Step 3.6.(I)8. ]
 For $i \in V_{P}$, recalculate
$\breve{q}^{t+1}_{i} = \frac{1}{RC^{t}_{i}} \sum_{j \in I(i)} \breve{a}^{t}_{ji}\breve{q}^{t}_{j}. $
\end{description}
}}}

For (Case II), 
Algorithm~5.4(II)  is applied. 
The output of the FFS procedure is $\hat{u}_{ij}^t$. 
If $\hat{u}_{ij}^t = 1$, then it means that the arc $(i,j) \in A$
should be connected at time $t$.
With the connected arcs constructed from the output  $\hat{u}_{ij}^t$, it cannot be guaranteed 
that the quality requirements at the plants are satisfied.
Thus, we perform the following steps to meet the quality requirements:
Between the intermediate tanks $V_I$ and the plants $V_P$, we first consider 
the arcs
(denoted in $A_{IP}$ in Algorithm~5.4(II))
that need the greatest requirements at the plants.
Then,  between the  sources $V_S$ and the intermediate tanks $V_I$,  the arcs 
(denoted in $A_{SI}$ in Algorithm~5.4(II))
that provide more quantities to the intermediate tanks from the sources are used.
\\[5pt]
\centerline{\fbox{\parbox{\textwidth}{
{\bf Algorithm 5.4(II)}: {\bf [The case of insufficient supplies]}
\newline
For the time step $\hat{t}$, 
let $\breve{p}_i^{\hat{t}} = \hat{p}_i^{\hat{t}}$ and
$\breve{q}_i^{\hat{t}} = \tilde{q}_i^{\hat{t}} \ (i \in V)$.
\newline
For each $t = \hat{t}+1,\ldots, t^+$, apply the following steps:
\begin{description}
\item[Step 3.6.(II)1. ] 
For each plant node $i \in V_{P}$, compute the requirements  $D_i^t := RC^{t}_{i} \times RQ^{t}_{i}$.
For each intermediate tank $j \in V_I$, calculate the maximum supply value 
 defined by $S_j := \min \{ \breve{p}^{t}_{j} - p^{\min}_j,\ U_{ji} \} \times \breve{q}^{t}_{j}$.
Sort the plants and intermediate tanks in the descending order 
such that $D_{M_S+M_I+1}^t \ge \ldots \ge D_{M_S+M_I+M_P}^t$ and 
 $S_{M_S+1}^t \ge \ldots \ge S_{M_S+M_I}^t$.
\item[Step 3.6.(II)2. ] 
For each $i \in V_P$, find $j_i \in V_I$ such that $(j_i, i) \in A$, 
$S_{j_i}^t \ge D_i^t$ and $j_i > j_{i-1} >  \cdots > j_{M_S+M_I+1}$.
We  denote the set of such matching arcs by  $A_{IP} := \{(j_i, i) \in A : i=M_S+M_I+1, \ldots, M_S+M_I+M_P$\}.
If such matching arcs cannot be found, return $\x^{+}$.
\item[Step 3.6.(II)3. ]
For $i \in V_P$, set $\breve{v}^{t}_{i}=0$. For $(j,i) \in A_{IP}$,
calculate $\breve{q}^{t}_{i},\ \breve{a}^{t}_{ji},\ \breve{p}^{t+1}_{j}$ and $\breve{q}^{t+1}_{j}$  by
$\breve{q}^{t}_{i}=RQ^{t}_{i}, \  
  \breve{a}^{t}_{ji}= \frac{1}{\breve{q}^{t}_{j}}RC^{t}_{i}RQ^{t}_{i}\label{eq14}, \
  \breve{p}^{t+1}_{j} = \breve{p}^{t}_{j} - \breve{a}^{t}_{ji},$ and $\breve{q}^{t+1}_{j}=\breve{q}^{t}_{j}.
$ 
\item[Step 3.6.(II)4. ] For each source $k \in V_{S}$, compute the maximum supply value of 
$\bar{S}_k^t := \min \{  \breve{p}^{t}_{k},\ U_{kj}\} \times \breve{q}^{t}_{k}$.
Sort the sources in the descending order such that $\bar{S}_1^t \ge \ldots \ge \bar{S}_{V_S}^t$.
Let $J := V_I \backslash \{j \in V_I : (j, i) \in A_{IP} \ \mbox{for some} \ i \in V_P\}$ and
$\bar{M} := \min \{ M_{S}, (M_I - M_{P})\}$.
Find an arc set $A_{SI} := \{(k_\alpha, j_\alpha) \in (V_S \times J) \cap A | \alpha = 1, \ldots, \bar{M} \}$  
such that $\bar{S}_{k_\alpha} \ge \bar{S}_{j_\beta}$
and $\bar{S}_{j_\alpha} \le \bar{S}_{j_\beta}$ for $\alpha < \beta$.
\item[Step 3.6.(II)5. ] For $(k,j) \in A_{SI}$, compute $\breve{a}^{t}_{kj}, \breve{p}^{t+1}_{k}, \breve{q}^{t+1}_{k},  \breve{p}^{t+1}_{j}$ and $\breve{q}^{t+1}_{j}$ by
\begin{eqnarray*}
&&\hspace*{-30pt}\breve{a}^{t}_{kj} = \min \{ \breve{p}^{t}_{k},\ U_{kj},\ p^{\max}_{j} -\breve{p}^{t}_{j} \}, \ 
 \breve{p}^{t+1}_{k} = \breve{p}^{t}_{k} - \breve{a}^{t}_{kj},\
  \breve{q}^{t+1}_{k} = \breve{q}^{t}_{k}, \label{eq17}\\ 
 &&\hspace*{-30pt} \breve{p}^{t+1}_{j} = \breve{p}^{t}_{j} + \breve{a}^{t}_{kj} ,\ 
  \breve{q}^{t+1}_{j} = (\breve{p}^{t}_{j}\breve{q}^{t}_{j}  + \breve{a}^{t}_{kj}\breve{q}^{t}_{k})/\breve{p}^{t+1}_{j}.\label{eq18}
  \end{eqnarray*}
\item[Step 3.6.(II)6. ] For each arc $(i, j) \in A$ whose two nodes ($i$ and $j$) have not been updated in the previous steps, set $\breve{a}^{t}_{ij}=0$ to indicate that the arc $(i,j)$ is unused at time $t$. Compute 
$\breve{p}^{t+1}_{i} = \breve{p}^{t}_{i}, \ \breve{q}^{t+1}_{i} = \breve{q}^{t}_{i}, \  
\breve{p}^{t+1}_{j} = \breve{p}^{t}_{j}$ and $\breve{q}^{t+1}_{j} = \breve{q}^{t}_{j}$. 
\item[Step 3.6.(II)7.]
For $i \in V_{P}$, recalculate
$\breve{q}^{t+1}_{i} = \frac{1}{RC^{t}_{i}} \sum_{j \in I(i)} \breve{a}^{t}_{ji}\breve{q}^{t}_{j}. \label{eq0}$
\end{description}
}}}
Note that the proposed rescheduling method  is a heuristic method,
therefore, there still remains  possibility that the rescheduling method 
cannot meet all quality requirements.
In that case, the rescheduling method terminates with $\x^+$ at Step 3.6.(II).2. 

\section{Numerical results} \label{NUMER}
The main purposes of our numerical experiments are to see whether  the optimal values
of the SDP, SOCP and LP relaxation coincide as shown in Theorem \ref{THEOREM},
and to demonstrate the numerical efficiency of the rescheduling method over 
  Matlab function {\tt fmincon} used in Nishi's  method \cite{NISHI10}. 
We also illustrate the computational efficiency of the proposed method by solving large-sized problems  from 
the standard pooling test problems.

For numerical experiments,  we first test the eight instances in Nishi \cite{NISHI10} to compare our results with those in \cite{NISHI10}.
The eight test instances in \cite{NISHI10} which have a solution satisfying the plant requirements are illustrated in Figures \ref{F-1}
and \ref{F-2}, 
and their sizes are shown in Table \ref{T-3}.
 For example, instance 8 has 2 sources, 4 intermediate tanks and 2 plants with time discretization
28, which has the same number of nodes as an instance with $8 \times 28$ nodes without time discretization.
The intermediate tanks in the test instances are connected to each other as the complete graph. 
Second, 
we generated two larger test instances, instance 9 and 10, whose sizes are shown
in Table  \ref{T-3}. 
More precisely, the numbers of sources, intermediate tanks and plants of the test instances are increased up to  20, 10, respectively, with 
time discretization $M_T=2$, to see the computational efficiency of the SOCP and LP relaxation.
The pipelines between the intermediate tanks of the instances 9 and 10 are also
connected as the complete graph.  
 The number of variables of the
two instances are 1344 and 1616, respectively.
Third, we tested on Foulds 3, Foulds 4 and Foulds 5   whose sizes are larger than the other  standard pooling test problems
 \cite{ALFAKI13}.

Numerical experiments were conducted on a Mac with OS X EI Captin version $10.11$, processor $3.2$GHz Intel Core i$5$, 
memory $8$GB, $1867$MHz DDR$3$, MATLAB\_R$2016$a.

To implement Nishi's  method \cite{NISHI10} based on Algorithm 5.1, 
SparsePOP \cite{WAKI2008}, a general polynomial optimization problem solver that includes {\tt fmincon} after solving the SDP relaxation, 
was applied to the test instances.
When {\tt fmincon} was used as a nonlinear programming solver for Nishi's method,  parameters were changed with 
 TolFun=$10^{-1}$ and  TolCon=$10^{-3}$.
For our proposed method described in Algorithm 5.2, 
we used SPOTless \cite{SPOTLESS} and MOSEK \cite{MOSEK} to solve the SOCP \eqref{SOCP1FINAL}  and LP relaxation \eqref{LPR} and CPLEX \cite{CPLEX} to solve FFS \eqref{FFSPR}, and applied our rescheduling method Algorithm 5.3.

We used $10^{-4}$ for 
the penalty weight $\delta$  in our proposed formulation \eqref{PPR}.

\begin{figure}[hbt]
\hspace*{-1.7cm}
\includegraphics[width=11cm, height=7cm]{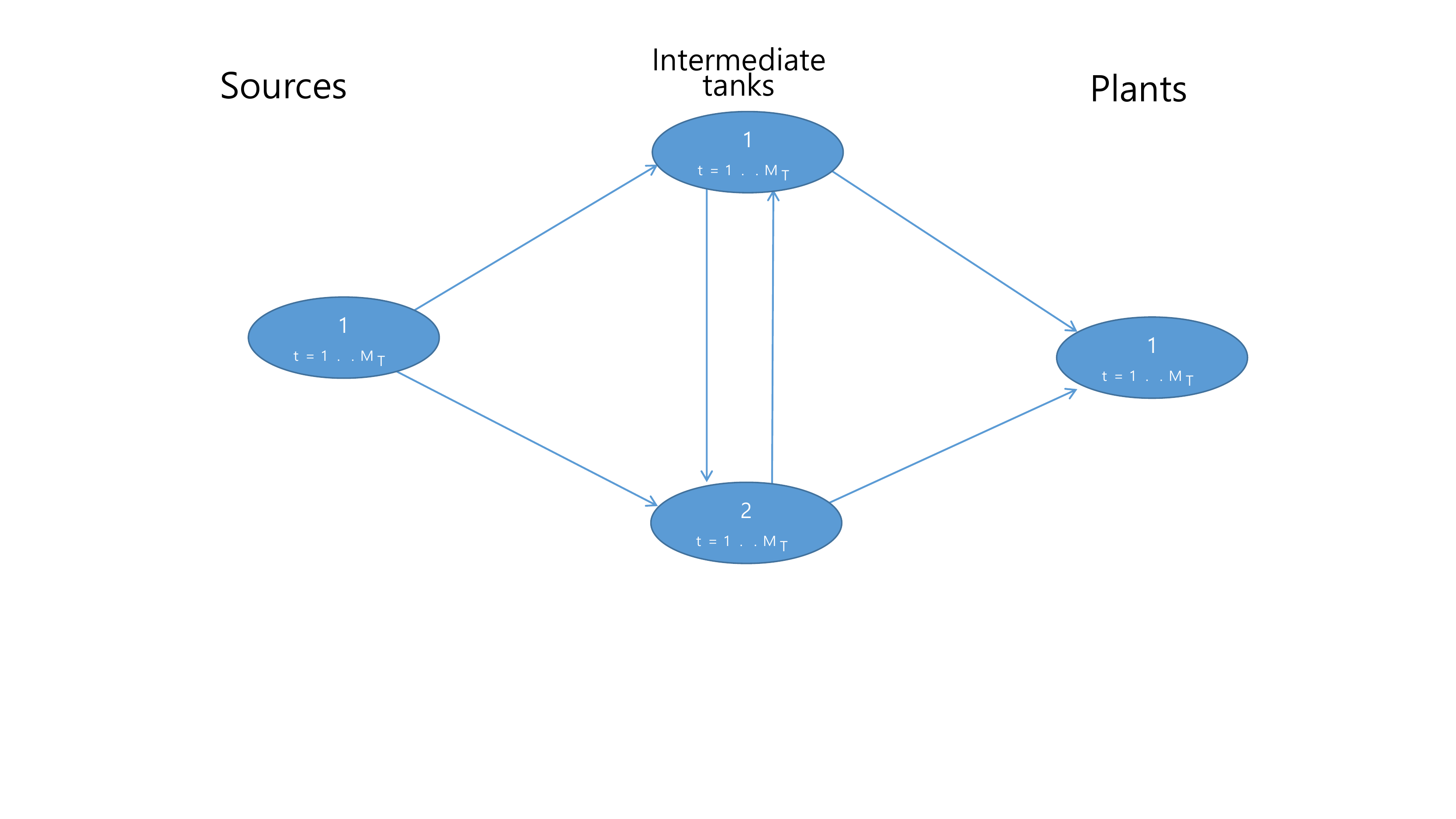} 
\hspace*{-1.7cm} 
\includegraphics[width=10cm, height=7cm]{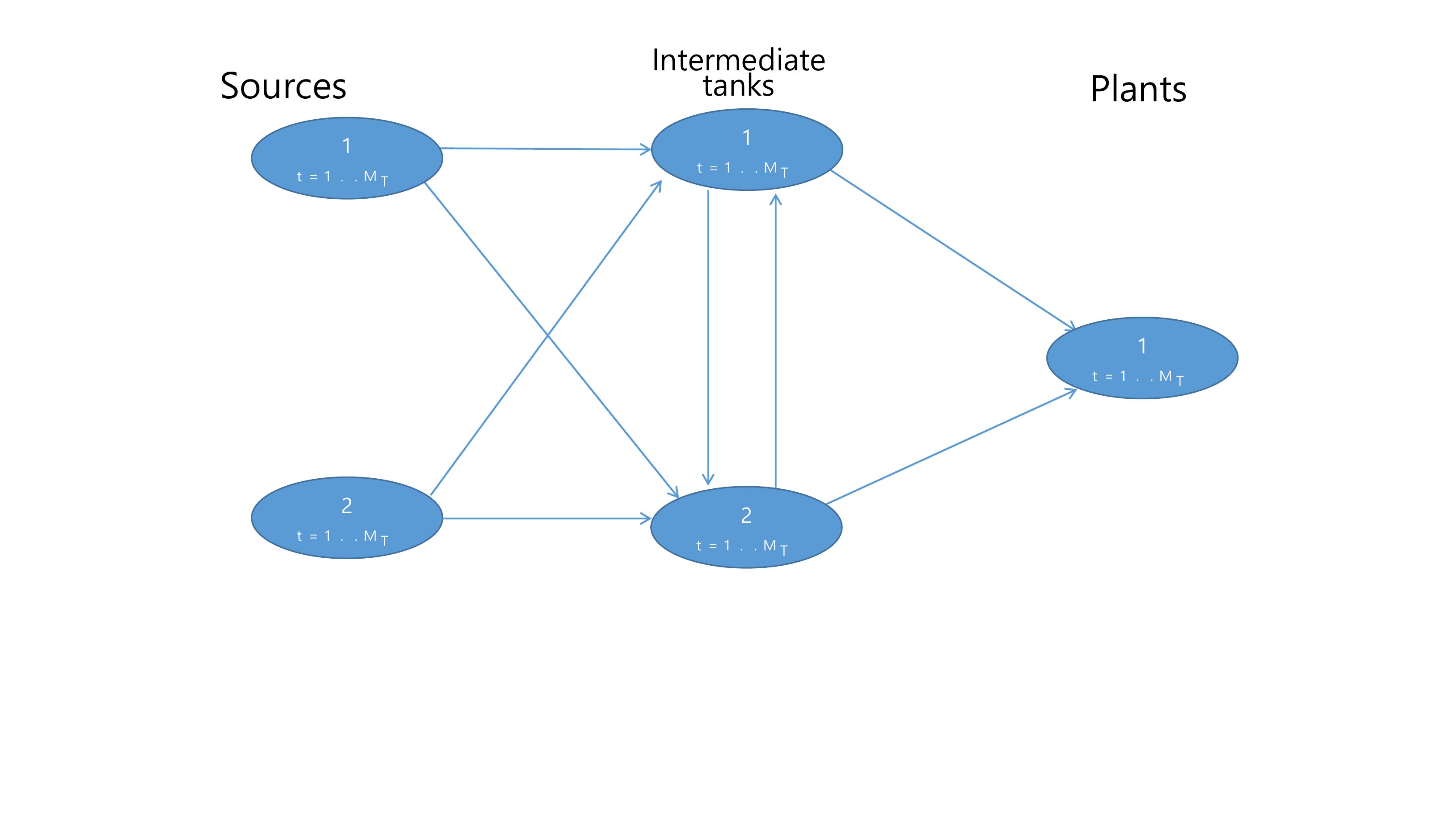}  
\vspace*{-2.3cm}
\caption{Instances 1, 2, 3, and 4}\label{F-1}
\end{figure}

\begin{figure}[hbt]
\hspace*{-1.7cm}
\includegraphics[width=11cm, height=7cm]{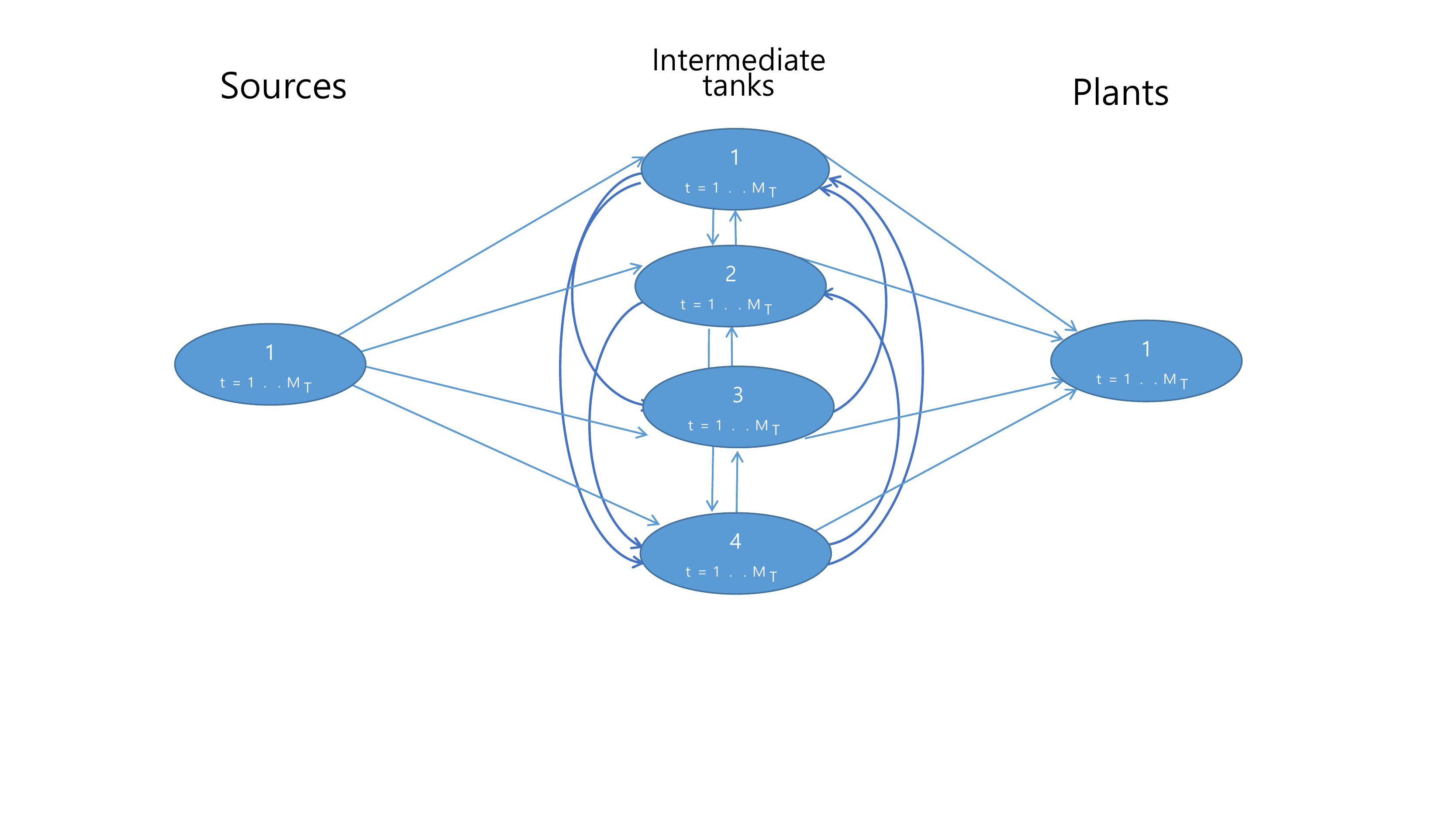} 
\hspace*{-1.7cm} 
\includegraphics[width=10cm, height=7cm]{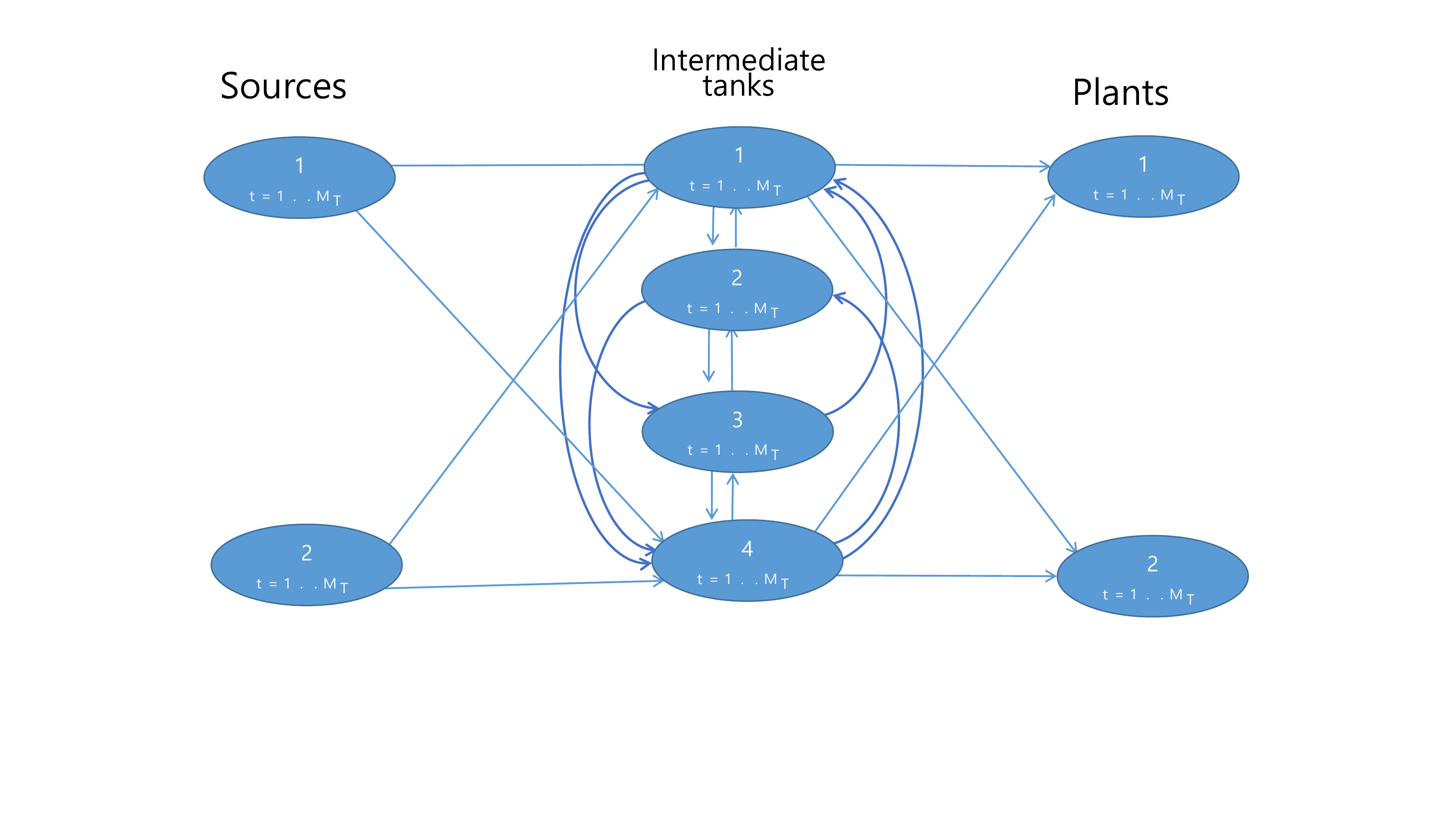}  
\vspace*{-2cm}
\caption{Instances 5, 6, 7, and 8}\label{F-2}
\end{figure}

\begin{table}[h!t]
\begin{center}
\caption{$M_S$: the number of sources, $M_I$: the number of intermediate tanks, $M_P$: the number of plants,
$(\#a, \#p , \#q, \#v)$: the numbers of variables,  $|A|$: the number of pipelines, $n$:  sum of all variables. 
} 
\label{T-3}
  \begin{tabular}{|c||c|c|c|c|c|c|c|c|c|c|} 
  \hline
Instance&$M_S$&$M_I$&$M_P$&$M_T$&$|A|$&$\#a$&$\#p$&$\#q$&$\#v$&$n$  \\ \hline
1& 1&2 &1  &10 &  6&  60&29&39&10&138 \\ \hline
2& 1&2 &1  &20 &  6&120&59&79&20&298 \\ \hline
3& 2&2 &1  &10 &  8&  80&38&48&10&176 \\ \hline
4& 2&2 &1  &20 &  8&160&78&98&10&356 \\ \hline
5& 1&4 &1  &7   &20&140&34&41&7&222 \\ \hline
6& 1&4 &1  &14 &20&280&69&83&14&446 \\ \hline
7& 2&4 &2  &28 &28&784&166&222&56&1228\\ \hline
8& 2&4 &2  &28 &28&784&166&222&56&1228 \\ \hline \hline
9&10&18&7&2 &612&1224&46&60&14&1344\\ \hline
10 &8&20&10&2 &740&1480&48&68&20&1616 \\ \hline \hline
Foulds 3&11&8&16&1&160&160&0&8&0&168 \\ \hline
Foulds 4&11&8&16&1&160&160&0&8&0&168 \\ \hline
Foulds 5&11&4&16&1&96&96&0&4&0&100 \\ \hline
  \end{tabular}
  \end{center}
\end{table}

The experimental results of each instance are summarized in Tables \ref{result-1-4}, \ref{result-5-8} and \ref{resultLarge}.
We use the following ratio to measure how much  the obtained solution successfully satisfies the requirements of all nodes:
 \begin{eqnarray*}
\mbox{sucs.ratio} \displaystyle  
&=& \left(\displaystyle \sum_{t \in T}\sum_{i \in V_{P}} RQ_{i}^{t}  
- \sum_{t \in T}\sum_{i \in V_{P}} v_{i}^{t} \right)\left/ \displaystyle \sum_{t \in T}\sum_{i \in V_{P}} RQ_{i}^{t} \right. \\
&=& \frac{ {\text{(the sum of the  requirements)}}
- {\text{(the sum of insuffcienct values)}} }{{\text{(the sum of requirements)}}}.
\end{eqnarray*}
Note that sucs.ratio in the rescheduling method is less than $100\%$
only when the rescheduling method terminates at Step 3.6.(II)2.,  as it cannot meet the quality requirements.
In the tables,   we show sucs.ratio, the optimal  values of the relaxation methods, the computed objectives values of the test instances,
  execution time for each relaxation, and {\tt fmincon} or the rescheduling method in Section 5.
 Sdp.ffs.nls means applying three procedures: (i) the SDP relaxation, (ii) FFS and (iii) {\tt fmincon}. Socp and Lp mean the SOCP and LP relaxation, respectively. Socp.ffs.reschd and Lp.ffs.reschd denote that the instances were solved by applying
 the rescheduling method including the SOCP or LP relaxation, and FFS as described in
 Section 5.  
We note that Nishi's method employs the SDP relaxation of \eqref{PPM}. The SDP relaxation in Sdp.ffs.nls is obtained from \eqref{PPO}.
 As the number of constraints in \eqref{PPO} is larger than that of \eqref{PPM},  it takes longer to solve the SDP relaxation of
  \eqref{PPO} than that of Nishi's method.
 In the column ``Ffs.fmincon or Ffs.reschd'',   the computation time 
by FFS and fmincon for the methods that employ fmincon is shown.
 FFS took very short time and  most of CPU time was consumed by fmincon in the experiments.
For the rescheduling method, the computational time  
for the entire rescheduling method excluding the conic relaxation is shown.

Table \ref{result-1-4} displays the results on the test instances 1-4 in \cite{NISHI10} whose number of variables varies
 from $n=138$ to $n=356$.
For all instances, we see in the column  Total  that the SOCP and LP relaxation consumed much shorter CPU time than Nishi's method and 
the SDP relaxation of \eqref{PPO}.  The rescheduling method took much shorter CPU time than the  nonlinear program solver {\tt fmincon}. 
As a result, Socp.ffs.reschd and Lp.ffs.reschd show shorter total CPU time than the other methods, except for the instance 2.
For the instance 2, the optimal value could be found by the SDP relaxation of   \eqref{PPM}, thus,  {\tt fmincon} did not take long to 
converge with the stopping criteria.
In the column Relax, the CPU time by Socp.ffs.reschd and Lp.ffs.reschd is longer than that of Socp.ffs.nls and Lp.ffs.nls as
the SOCP and LP relaxations are repeatedly solved in the rescheduling method as described in Algorithm 5.3.

For the objective values obtained by the methods in Table \ref{result-1-4}, we confirm that the SDP, SOCP and LP relaxations of
 \eqref{PPO}  compute the equivalent objective values, as described in Section 4.
 For Socp.ffs.reschd and Lp.ffs.reschd, two values are shown for the column of Relax.obj.val to denote the objective value at the
 starting time  and the final time, respectively, as the SOCP and LP relaxations are repeatedly solved.
Socp.ffs.reschd  and Lp.ffs.reschd frequently provide 100\% sucs.ratio with the smallest objective values in the column of Obj.val.
We observe that   Socp.ffs.reschd and Lp.ffs.reschd are computationally efficient and effective to obtain  smaller objective values
than the other methods.

\begin{table}[tbp]
\begin{center}
\caption{The results for the instances 1, 2, 3, and 4.  ``more than 24 hours'' means that the method could not
provide a solution within 24 hours.} \label{result-1-4}
\scalebox{0.9}{
\begin{tabular}{|r||r|r|r||r|r|r|} 
\hline
\multicolumn{1}{|c||}{Problem} & \multicolumn{6}{c|}{Instance 1 ($n=138$)} \\ \hline
 \multicolumn{1}{|c||}{ }   & & & & \multicolumn{3}{c|}{CPU time (seconds) }  \\ \cline{5-7}
\multicolumn{1}{|c||}{Methods} &Sucs.ratio&Relax.obj.val &  \multicolumn{1}{|c||}{Obj.val} &Relax &\thead{Ffs.fmincon or \\Ffs.reschd} &\multicolumn{1}{|c|}{Total}  \\ \hline
Nishi's  method &$97.61\%$&$202.3$&$11243.44$&6.95&1925.17&$1932.12$ \\ \hline%
Sdp.ffs.nls&$95.70\%$&$222.5$&$19827.98$&$142.67$&$340.28$&$482.95$ \\ \hline%
Socp.ffs.nls&$97.83\%$&$222.5$&$10262.53$&$12.51$&$218.38$&$230.89$ \\ \hline%
Lp.ffs.nls&$95.27\%$&$222.5$&$21792.29$&$11.76$&$5.96$&$17.72$ \\ \hline%
Socp.ffs.reschd&$100.00\%$&$222.5,\ 122.5$&$470.00$&$30.46$&$2.05$&$32.51$ \\ \hline
Lp.ffs.reschd&$98.34\%$&$222.5,\ 162.5$&$7944.94$&$20.29$&$1.25$&$21.54$ \\ \hline \hline
\multicolumn{1}{|c||}{Problem} & \multicolumn{6}{c|}{Instance 2 ($n=298$)} \\ \hline
&Sucs.ratio&Relax.obj.val &Obj.val&Relax &\thead{Ffs.fmincon or \\Ffs.reschd} &\multicolumn{1}{|c|}{Total}   \\ \hline
Nishi's  method&$99.78\%$&$617.92$&$3058.10$&8.83&16.30&$25.13$ \\ \hline%
Sdp.ffs.nls& - & - & -& \multicolumn{3}{c|}{ more than $ 24$ hours } \\ \hline
Socp.ffs.nls&$99.74\%$&$800.00$&$3407.69$&$40.60$&$15.74$&$56.34$ \\ \hline%
Lp.ffs.nls&$99.87\%$&$800.00$&$2253.85$&$29.54$&$15.73$&$45.28$ \\ \hline%
Socp.ffs.reschd&$100.00\%$&$800.00 ,\ 687.06$&$1092.94 $&$75.15$&$1.99$&$77.14$ \\ \hline
Lp.ffs.reschd&$100.00\%$&$800.00,\ 687.06$&$1092.94$&$61.69$&$1.92$&$63.61$ \\ \hline \hline
\multicolumn{1}{|c||}{Problem} & \multicolumn{6}{c|}{Instance 3 ($n=176$)} \\ \hline
&Sucs.ratio&Relax.obj.val &Obj.val&Relax &\thead{Ffs.fmincon or \\Ffs.reschd} &\multicolumn{1}{|c|}{Total}   \\ \hline
Nishi's  method&$98.66\%$&$181.14$&$6594.55 $&$7.82$&$2866.87$&$2874.69$\\ \hline%
Sdp.ffs.nls&$99.99\%$&$300.00$&$546.91$&$548.06$&$2856.68$&$3404.74 $ \\ \hline%
Socp.ffs.nls&$80.11\%$&$300.00$&$89974.73$&$16.21$&$2885.36$&$2901.57 $ \\ \hline%
Lp.ffs.nls&$99.99\%$&$300.00$&$510.81$&$14.17$&$2892.56 $&$2906.73 $ \\ \hline%
Socp.ffs.reschd&$100.00\%$&$300.00 ,\ 91.57$&$503.97$&$36.49$&$1.62$&$38.11$ \\ \hline
Lp.ffs.reschd&$99.70\%$&$300.00,\ 55.57$&$1852.02 $&$32.75$&$1.78$&$34.52$ \\ \hline \hline
\multicolumn{1}{|c||}{Problem} & \multicolumn{6}{c|}{Instance 4 ($n=356$)} \\ \hline
&Sucs.ratio&Relax.obj.val &Obj.val&Relax &\thead{Ffs.fmincon or \\Ffs.reschd} &\multicolumn{1}{|c|}{Total}   \\ \hline
Nishi's  method&$96.06\%$&$728.18$&$36635.41$&13.20&7044.23&$7057.45 $\\ \hline%
Sdp.ffs.nls& - & - & -& \multicolumn{3}{c|}{ more than $ 24$ hours } \\ \hline
Socp.ffs.nls&$97.03\%$&$900.00$&$27842.37$&$77.87$&$4294.91$&$4372.78 $ \\ \hline%
Lp.ffs.nls&$99.48\%$&$900.00$&$5758.37$&$58.74$&$7334.06$&$7392.80 $ \\ \hline%
Socp.ffs.reschd&$100.00\%$&$900.00,\ 41.87$&$1076.34 $&$186.49$&$4.28$&$190.77$ \\ \hline%
Lp.ffs.reschd&$100.00\%$&$900.00,\ 170.71$&$1062.41$&$161.04$&$5.71$&$166.75$ \\ \hline%
\end{tabular}
}
\end{center}
\end{table}
Table \ref{result-5-8} shows the results for the instances 5, 6, 7, and 8 in \cite{NISHI10} where
$n$ ranges from 222 to 1228.
We also see in the column  Total  that the CPU time spend by Socp.ffs.reschd and Lp.ffs.reschd is much shorter than the other methods.
In the column of Obj.val, the smallest objective values were obtained by Lp.ffs.reschd except for the instance 5.
We notice that the highest sucs.ratio leads to the smallest objective value for all instances.
For the instances 7 and 8 where $n$ is large, Socp.ffs.reschd and Lp.ffs.resched achieve 100\% sucs.ratio, and
Lp.ffs.reschd consumed the shortest CPU time. 

\begin{table}[tbp]
\begin{center}
\caption{The results for the instances 5, 6, 7, and 8. ``more than 24 hours'' means that the method could not
provide a solution within 24 hours.} \label{result-5-8}
\scalebox{0.9}{
\begin{tabular}{|r||r|r|r||r|r|r|} 
\hline
\multicolumn{1}{|c||}{Problem} & \multicolumn{6}{c|}{Instance 5 ($n=222$)} \\ \hline
 \multicolumn{1}{|c||}{ }   & & & & \multicolumn{3}{c|}{CPU time (seconds) }  \\ \cline{5-7}
\multicolumn{1}{|c||}{Methods} &Sucs.ratio&Relax.obj.val &  \multicolumn{1}{c||}{Obj.val} & \multicolumn{1}{c|}{Relax} &\thead{Ffs.fmincon or \\Ffs.reschd} &\multicolumn{1}{|c|}{Total}  \\ \hline
Nishi's  method&$99.44\%$&$17.87$&$3046.25$&13.33&185.67&$198.99$ \\ \hline%
Sdp.ffs.nls&$92.59\%$&$134.00$&$35658.10$&$3239.96$&$14.54$&$3254.50$ \\ \hline%
Socp.ffs.nls&$91.60\%$&$134.00$&$40381.10$&$19.27$&$153.36$&$172.63$ \\ \hline%
Lp.ffs.nls&$92.76\%$&$134.00$&$34890.66$&$17.81$&$786.83$&$804.64$ \\ \hline%
Socp.ffs.reschd&$92.62\%$&$134.00,\ 54.00$&$35483.11$&$29.10$&$1.73$&$30.84$ \\ \hline%
Lp.ffs.reschd&$94.34\%$&$134.00,\ 54.00$&$27358.17$&$30.07$&$1.60$&$31.67$ \\ \hline \hline%
\multicolumn{1}{|c||}{Problem} & \multicolumn{6}{c|}{Instance 6 ($n=446$)} \\ \hline
&Sucs.ratio&Relax.obj.val & \multicolumn{1}{c||}{Obj.val} & \multicolumn{1}{c|}{Relax} &\thead{Ffs.fmincon or \\Ffs.reschd} &\multicolumn{1}{|c|}{Total}   \\ \hline
Nishi's  method&$95.74\%$&$186.81$&$41470.49$&33.55&18000.13&$18033.68$ \\ \hline%
Sdp.ffs.nls& - & - & -& \multicolumn{3}{c|}{ more than $ 24$ hours } \\ \hline%
Socp.ffs.nls&$93.14\%$&$334.00$&$66194.84$&$102.33$&$1055.38 $&$1157.71 $ \\ \hline%
Lp.ffs.nls&$96.51\%$&$334.00$&$34102.63$&$80.02$&$9738.29$&$9818.31$ \\ \hline%
Socp.ffs.reschd&$96.11\%$&$334.00,\ 134.00$&$37889.83$&$189.82$&$8.90$&$198.72$ \\ \hline%
Lp.ffs.reschd&$97.85\%$&$334.00,\ 38.37$&$21272.54$&$226.01$&$8.09$&$234.10$ \\ \hline \hline%
\multicolumn{1}{|c||}{Problem} & \multicolumn{6}{c|}{Instance 7 ($n=1228$)} \\ \hline
&Sucs.ratio&Relax.obj.val & \multicolumn{1}{c||}{Obj.val} & \multicolumn{1}{c|}{Relax} &\thead{Ffs.fmincon or \\Ffs.reschd} &\multicolumn{1}{|c|}{Total}   \\ \hline
Nishi's  method&$99.51\%$&$475.33$&$7176.40$&$263.59$&$39716.96$&$39980.55$ \\ \hline%
Sdp.ffs.nls& - & - & -& \multicolumn{3}{c|}{ more than $ 24$ hours } \\ \hline
Socp.ffs.nls&$99.99\%$&$824.00$&$1649.13$&$1754.10$&$32448.45 $&$34202.55$ \\ \hline%
Lp.ffs.nls&$99.99\%$&$824.00$&$1667.51$&$1368. 04$&$9707.45$&$11075.49$ \\ \hline%
Socp.ffs.reschd&$100.00\%$&$824.00,\ 1916.96$&$1605.70$&$11761.98$&$674.45$&$12436.43$ \\ \hline%
Lp.ffs.reschd&$100.00\%$&$824.00,\ 1996.89$&$1599.44$&$9177.46$&$688.87$&$9866.32$ \\ \hline \hline%
\multicolumn{1}{|c||}{Problem} & \multicolumn{6}{c|}{Instance 8 ($n=1228$)} \\ \hline
&Sucs.ratio&Relax.obj.val & \multicolumn{1}{c||}{Obj.val} & \multicolumn{1}{c|}{Relax}  &\thead{Ffs.fmincon or \\Ffs.reschd} &\multicolumn{1}{|c|}{Total}   \\ \hline
Nishi's  method&$98.27\%$&$458.22$&$22214.26$&$256.89$&$39443.43$&$39700.32$ \\ \hline%
Sdp.ffs.nls& - & - & -& \multicolumn{3}{c|}{ more than $ 24$ hours } \\ \hline%
Socp.ffs.nls&$99.99\%$&$824.00$&$1714.70$&$1746.94$&$20929.01$&$22675.94$ \\ \hline%
Lp.ffs.nls&$99.33\%$&$824.00$&$8957.09$&$1368.09$&$39583.53$&$40951.62$ \\ \hline%
Socp.ffs.reschd&$100.00\%$&$824.00,\ 20.07$&$1636.82 $&$13311.83$&$800.91$&$14112.74$ \\ \hline%
Lp.ffs.reschd&$100.00\%$&$824.00, \ 20.07$&$1634.99$&$8835.86$&$660.60$&$9496.46 $ \\ \hline%
\end{tabular}
}
\end{center}
\end{table}
\begin{table}[htbp]
\begin{center}
\caption{The results for the instances  9 and 10. The instances 9 and 10 with $M_T = 2$.
 ``$>$ 24 hours'' means that the method could not
provide a solution within 24 hours.} \label{resultLarge}
\scalebox{0.9}{
\begin{tabular}{|r||r|r|r||r|r|r|} 
\hline
\multicolumn{1}{|c||}{Problem} & \multicolumn{6}{c|}{Instance 9 ($n=1344$)} \\ \hline
 \multicolumn{1}{|c||}{ }   & & & & \multicolumn{3}{c|}{CPU time (seconds) }  \\ \cline{5-7}
&Sucs.ratio&Relax.obj.val &Obj.val&Relax &\thead{Ffs.fmincon or \\Ffs.reschd} &\multicolumn{1}{|c|}{Total}   \\ \hline
Nishi's  method&\multicolumn{6}{c|}{Fail to solve the SDP relaxation due to out-of-memory} \\ \hline%
Socp.ffs.nls&$82.83\%$&$490.05$&$81347.30$&$805.78$& $> 24$ hours&$-$ \\ \hline%
Lp.ffs.nls&$91.39\%$&$490.05$&$41146.81$&$588.55$&$ > 24$ hours&$-$ \\ \hline%
Socp.ffs.reschd&$100.00\%$&$490.05 ,\ 95.12$&$657.35$&$919.18$&$112.92$&$1032.10$ \\ \hline
Lp.ffs.reschd&$100.00\%$&$490.05,\ 94.77$&$657.35$&$674.68$&$112.96$&$787.64$ \\ \hline \hline
\multicolumn{1}{|c||}{Problem} & \multicolumn{6}{c|}{Instance 10 ($n=1616$)} \\ \hline
 \multicolumn{1}{|c||}{ }   & & & & \multicolumn{3}{c|}{CPU time (seconds) }  \\ \cline{5-7}
&Sucs.ratio&Relax.obj.val &Obj.val&Relax &\thead{Ffs.fmincon or \\Ffs.reschd} &\multicolumn{1}{|c|}{Total}   \\ \hline
Nishi's  method&\multicolumn{6}{c|}{Fail to solve the SDP relaxation due to out-of-memory}\\ \hline%
Socp.ffs.nls&$86.00\%$&$511.00$&$105842.11$&$1236.56$& $ > 24$ hours&$-$ \\ \hline%
Lp.ffs.nls&$89.53\%$&$511.00$&$81377.36$&$911.34$&$ > 24$ hours&$-$ \\ \hline%
Socp.ffs.reschd&$100.00\%$&$511.00 ,\ 108.87$&$759.06$&$1425.09$&$28.29$&$1453.38$ \\ \hline
Lp.ffs.reschd&$100.00\%$&$511.00,\ 102.26$&$759.06$&$1050.38$&$30.07$&$1080.45$ \\ \hline \hline
\end{tabular}
}
\end{center}
\end{table}
Next,
we tested the proposed method
on the problems with the minimum number of time discretization to see how large number of variables 
can be solved with the proposed method. 
The number of sources, intermediate tanks, and plants are shown in Table \ref{T-3} and the intermediate tanks
in the instances 9 and 10 are connected  as a complete graph shown in Figure \ref{F-2}.
The SDP relaxations for Nishi's method and \eqref{SDP} could not be solved since the sizes of the SDP relaxations
were too large to handle on our computer. In Table \ref{resultLarge}, we observe that
{\tt fmincon} used in the methods Socp.ffs.nls and Lp.ffs.nls took long time, and could not provide a solution
within 24 hours for the  instances  9 and 10. 
On the other hand, the rescheduling method,  Socp.ffs.reschd and Lp.ffs.reschd, successfully
solve the problems in much shorter time.
The objective values obtained by Socp.ffs.reschd and Lp.ffs.reschd are smaller than those of Socp.ffs.nls and Lp.ffs.nls,
providing 100\% sucs.ratio.

We tested the proposed method on Foulds 3, 4 and 5 which have the largest number of
variables among the standard test problems for
the pooling problem in P-formulation such as Haverly, Ben-Tal, Foulds, Adhya, and RT2. 
As time discretization is not used in the problems, the SDP, SOCP and LP relaxations and FFS are applied to Foulds 3, 4 and 5. 
Table \ref{F-6} displays the numerical results on the test problems. 
We see that the objective value obtained by the SDP, SOCP and LP relaxations are equivalent.
FFS was applied to find a feasible solution of the original pooling problem.
From the CPU time spent by the SDP, SOCP and LP relaxations,  we observe that the LP relaxation is most efficient.
The optimal values of Foulds 3, 4 are known as -8 \cite{MORANDI18}. 
The proposed method  finds an approximate solution of Foulds 3, 4 with $n=168$  with much shorter computational
time than that in \cite{MORANDI18}. 
As the instances 9 and10  are much larger than Foulds 3, 4 and 5, the proposed method can be applied to larger pooling
problems than the standard pooling test problems.

\begin{table}[htbp]
\begin{center}
\caption{ Numerical results on Foulds problems} \label{F-6}
\begin{tabular}{|c|r|r||r|r|r|} 
\hline
Problem &\multicolumn{5}{c|}{ Foulds 3 $(n=168)$} \\ \hline
\multirow{ 2}{*}{Relaxation}&\multicolumn{2}{c||}{ }& \multicolumn{3}{c|}{CPU time (seconds)} \\ \cline{2-6}
   &\multicolumn{1}{c|}{Relax.obj.val } &\multicolumn{1}{c||}{FFS.obj.val}&\multicolumn{1}{c|}{ Relax.} & \multicolumn{1}{c|}{ Ffs }&\multicolumn{1}{c|}{ Total} \\ \hline
Sdp.ffs  &$-9.00$&$-3.82$&$317.63$&$0.81$&$318.44$ \\ \hline
Socp.ffs &$-9.00$&$-3.82$&$9.81$&$0.45$&$10.26$ \\ \hline
Lp.ffs &$-9.00$&$-3.82$&$8.83$&$0.43$&$9.26$ \\ \hline \hline
Problem & \multicolumn{5}{c|}{ Foulds 4 $(n=168)$} \\ \hline
   &\multicolumn{1}{c|}{Relax.obj.val } &\multicolumn{1}{c||}{FFS.obj.val}&\multicolumn{1}{c|}{  Relax.} & \multicolumn{1}{c|}{ Ffs }&\multicolumn{1}{c|}{ Total} \\ \hline
Sdp.ffs &$-9.00$&$-2.64$&$300.51$&$0.44$&$300.95$ \\ \hline
Socp.ffs &$-9.00$&$-2.64$&$10.02$&$0.44$&$10.46$ \\ \hline
Lp.ffs &$-9.00$&$-2.64$&$9.24$&$0.44$&$9.68$ \\ \hline \hline
Problem & \multicolumn{5}{c|}{ Foulds 5 $(n=100)$} \\ \hline
   &\multicolumn{1}{c|}{Relax.obj.val } &\multicolumn{1}{c||}{FFS.obj.val}&\multicolumn{1}{c|}{  Relax.} & \multicolumn{1}{c|}{ Ffs }&\multicolumn{1}{c|}{ Total} \\ \hline
Sdp.ffs &$-11.00$&$-0.83$&$23.47$&$0.45$&$23.92$ \\ \hline
Socp.ffs &$-11.00$&$-0.83$&$6.56$&$45$&$7.01$ \\ \hline
Lp.ffs &$-11.00$&$-0.83$&$5.97$&$0.47$&$6.42$ \\ \hline
\end{tabular}
\end{center}
\end{table}

\section{Concluding remarks}

We have proposed an efficient computational method for the pooling problem with time discretization using
 the SOCP and LP relaxations and the rescheduling method. From the form of QCQPs for the 
 pooling problem in \cite{NISHI10},
our formulation with time discretization has been obtained by relaxing the equality constraints
into inequality constraints and introducing penalty terms in the objective function.
We have shown theoretically that there exists no gap among the optimal values of the SDP, SOCP and LP relaxations of 
our formulation. This theoretical result can be used in other formulations of the pooling problem where only bilinear terms
appear.

Computational results have been presented to show the efficiency of the proposed method over the SDP relaxations of
the pooling problems and applying a nonlinear programming solver.
From the numerical results, we have demonstrated that the SOCP and LP relaxations are more computationally efficient
than the SDP relaxations while obtaining the same optimal value. Moreover, our proposed rescheduling method is much faster  than the nonlinear programming solver 
{\tt fmincon} and effective in obtaining a solution that satisfies all the requirements. As a result, large instances up to $n=1616$
could be solved with the LP relaxation and the rescheduling method.

As  the  pooling problem  is a bilinear problem, it may be possible to utilize the structure  \cite{MORANDI18}
of the problem to further improve
the computational efficiency. We hope to investigate the underlying structure of the pooling problem 
for solving  large-scale   problems.

\bibliographystyle{plain}

\end{document}